\newtheorem{theorem}{Theorem}[section]
\newtheorem{lemma}[theorem]{Lemma}
\theoremstyle{remark}
\newtheorem{conjecture}[theorem]{Conjecture}
\numberwithin{equation}{section}
\begin{document}

\title[Non-orientable fundamental surfaces in lens spaces] {
Non-orientable fundamental surfaces in lens spaces}

\author{Miwa Iwakura and Chuichiro Hayashi}

\date{\today}

\thanks{The last author is partially supported
by Grant-in-Aid for Scientific Research (No. 18540100),
Ministry of Education, Science, Sports and Technology, Japan.}

\begin{abstract}
 We give a concrete example of
an infinite sequence of $(p_n, q_n)$-lens spaces $L(p_n, q_n)$
with natural triangulations $T(p_n, q_n)$ with $p_n$ taterahedra
such that $L(p_n, q_n)$ contains a certain non-orientable closed surface
which is fundamental with respect to $T(p_n, q_n)$ and 
of minimal crosscap number among all closed non-orientable surfaces 
in $L(p_n, q_n)$
and has $n-2$ parallel sheets of 
normal disks of a quadrilateral type
disjoint from the pair of core circles of $L(p_n, q_n)$.
 Actually, we can set $p_0=0, q_0=1, p_{k+1}=3p_k+2q_k$ and $q_{k+1}=p_k+q_k$.
\\
{\it Mathematics Subject Classification 2000:}$\ $ 57N10.\\
{\it Keywords:}$\ $
normal surface, fundamental surface, lens space, non-orientable surface,
minimal crosscap number.
\end{abstract}

\maketitle

\section{Introduction}

 The theory of normal surface was introduced 
by H. Kneser (\cite{K}) and W. Haken (\cite{H}),
and have been playing an important role 
in study of topology of $3$-manifolds.
 Almost all sorts of important surfaces,
such as essential spheres, essential tori, 
knot spanning surfaces with maximal Euler characteristics and so on, 
can be deformed to normal surfaces
and to fundamental surfaces.
 See, for example, 
\cite{JO}, \cite{JT}, \cite{St}, \cite{Sel}, \cite{HL} and \cite{L}.

 We briefly recall the definitions 
of normal surfaces and fundamental surfaces.
 In \cite{K}, K. Kneser introduced normal surfaces.
 Let $M$ be a closed $3$-manifold and $T$ a triangulation of $M$, that is, 
a decomposition of $M$ into finitely many tetrahedra, 
where all the faces of the tetrahedra of $T$ are separated into pairs,
and each pair of faces are identified in $M$.
 Let $F$ be a closed surface embedded in $M$. 
 $F$ is called a {\it normal surface} with respect to the triangulation $T$ 
if $F$ intersects each tetrahedron in disjoint union of normal disks as below, 
or in the empty set.
 There are two kinds of {\it normal disks}, 
trigons called T-disks and quadrilaterals called Q-disks. 
 Each tetrahedron contains $4$ types of T-disks and $3$ types of Q-disks 
as in illustrated in Figure \ref{fig:normaldisks}. 
 Two normal disks are of the same {\it type}
if they have vertices in the same edges of $T$ 
as in Figure \ref{fig:squarecondition}.

\begin{figure}[htbp]
\begin{center}
\includegraphics[width=8cm]{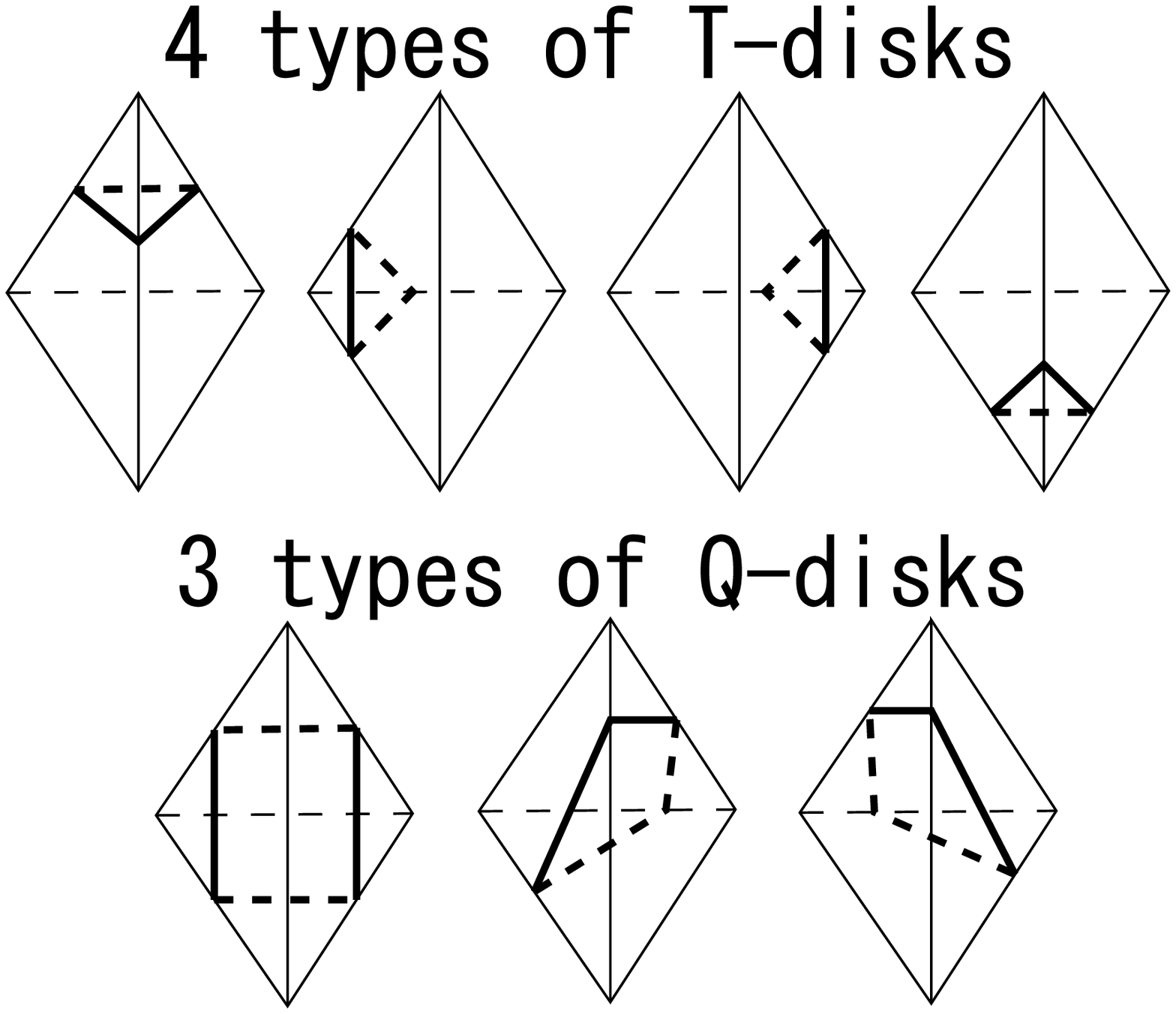}
\end{center}
\caption{}
\label{fig:normaldisks}
\end{figure}

 In \cite{H}, 
W. Haken found that normal surfaces 
correspond to non-negative integral solutions 
of a certain system of simultaneous linear equations with integer coefficients,
called the matching equations.
 First we number all the types of normal disks in the tetrahedra.
 Then we let a vector ${\bf v}_F$ represent a normal surface $F$
such that the $i$-th element $x_i$ of ${\bf v}_F$ is 
the number of the normal disks of the $i$-th type $X_i$ contained in $F$.
 In each $2$-simplex $\Delta$ of the triangulation $T$,
a properly embedded arc $\alpha$ is called a {\it normal arc}
if its two endpoints are in the interior of distinct two edges of $\Delta$.
 Two normal arcs are of the same {\it type}
if they have their endpoints in the same pair of edges of $\Delta$.
 One matching equation arises for each normal arc.
 For a type $\alpha$ of a normal arc in a $2$-simplex $\Delta$,
there are two tetrahedra $\tau_1$, $\tau_2$ which contain $\Delta$,
and each tetrahedron $\tau_i$ contains 
a trigonal type $X_{ij}$ of a normal disk
and a quadrilateral type $X_{ik}$ of a normal disk
which have an edge of type $\alpha$.
 The matching equation for $\alpha$ is $x_{1j}+x_{1k}=x_{2j}+x_{2k}$
where $x_{st}$ denotes a variable 
corresponding the number of normal disks of type $X_{st}$.
 The system of mathing equations for all the types of normal arcs 
is simply called {\it the matching equations}.

 We need some terminologies on algebra.
 Let 
${\bf v} = {}^t (v_1, \cdots, w_n)$, ${\bf w} = {}^t (w_1, \cdots, w_n)$ 
be vectors in ${\Bbb R}^n$,
where ${}^t{\bf x}$ denotes the transposition of ${\bf x}$.
 In this paper, we write ${\bf v} \le {\bf w}$ 
if $v_i \le w_i$ for all $i \in \{ 1, \cdots, n \}$.
 ${\bf v} < {\bf w}$
means that both ${\bf v} \le {\bf w}$ and ${\bf v} \ne {\bf w}$ hold.
 A vector ${\bf u} \in {\Bbb R}^n$ is {\it non-negative} 
if ${\bf 0} \le {\bf u}$,
and {\it integral} if all its elements are in ${\Bbb Z}$.

 The set of all the (possibly disconnected) normal surfaces 
is in one-to-one correspondence
with the set of all the non-negative integral solutions 
of the matching equations
satisfying the square condition as below.
 If two or three types of Q-disks exist in a single tetrahedron, 
then they intersect each other. 
 See Figure \ref{fig:squarecondition} (1).
 Hence any normal surface intersects 
each tetrahedron in Q-disks of the same type and T-disks
(Figure \ref{fig:squarecondition} (2)). 
 This is called {\it the square condition}. 

\begin{figure}[htbp]
\begin{center}
\includegraphics[width=8cm]{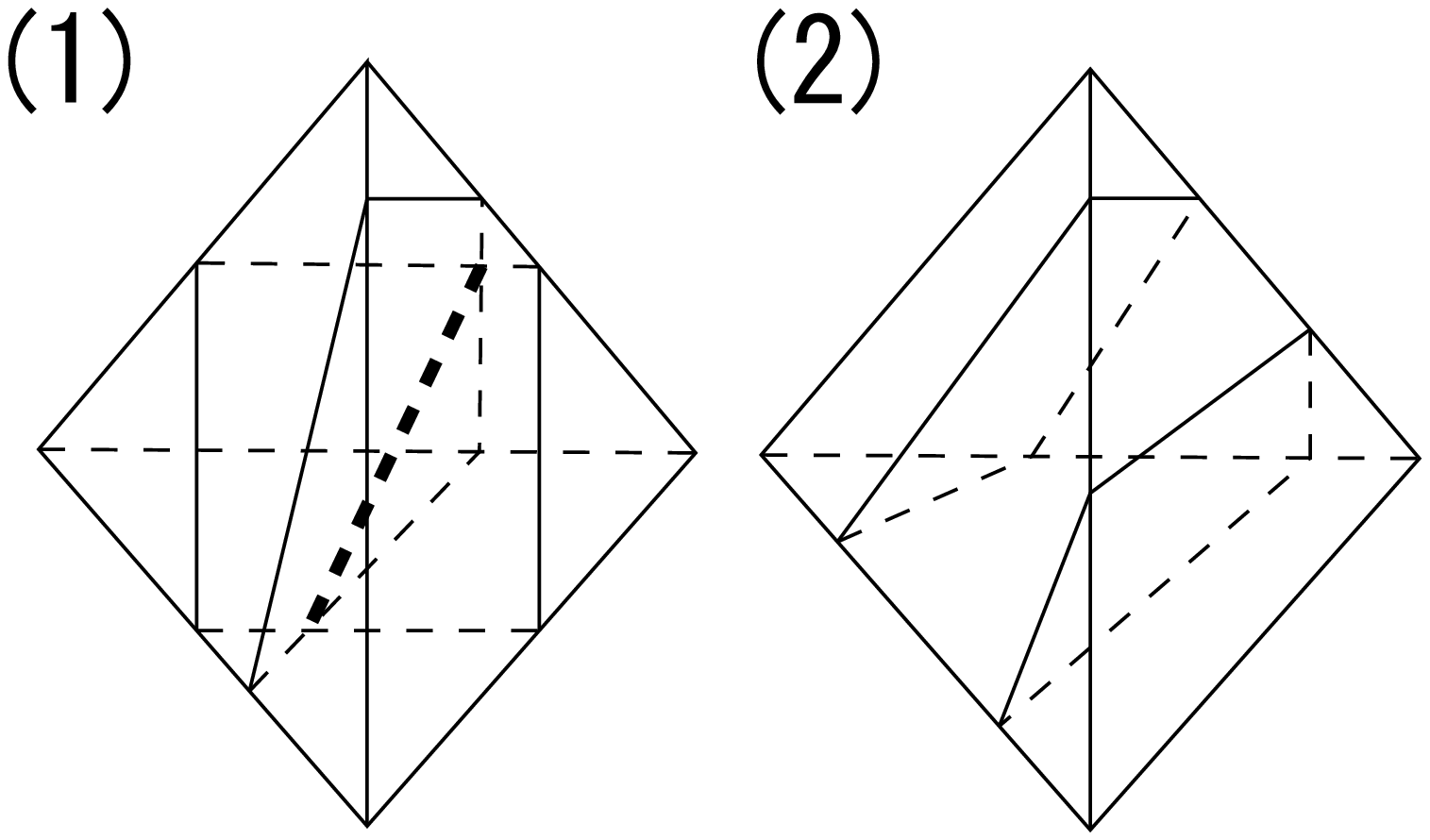}
\end{center}
\caption{}
\label{fig:squarecondition}
\end{figure}

 A normal surface is called a {\it fundamental surface}
if it corresponds to a fundamental solution
of the system of the matching equations
which is defined as below. 
 Let $A {\bf x}={\bf 0}$ be a linear system of equations, 
where $A$ is a matrix with all the elements in ${\Bbb Z}$, 
and ${\bf x}$ is a vector of variables. 
 $V_A$ denotes the solution space of the linear system 
considered in ${\Bbb R}^n$. 
 A non-zero non-negative integral solution ${\bf v}$ 
is called a {\it fundamental solution}, 
if there is no integral solution ${\bf v}' \in V_A$ 
with ${\bf 0} < {\bf v}' < {\bf v}$.
 There are only finite number of fundamental solutions for each system.
 Moreover, an upper bound for elements of fundamental solutions is known
(\cite{HLP}).
 Hence there is an algorithm which determines all the fundamental solutions,
though it is not practical.

\begin{figure}[htbp]
\begin{center}
\includegraphics[width=6cm]{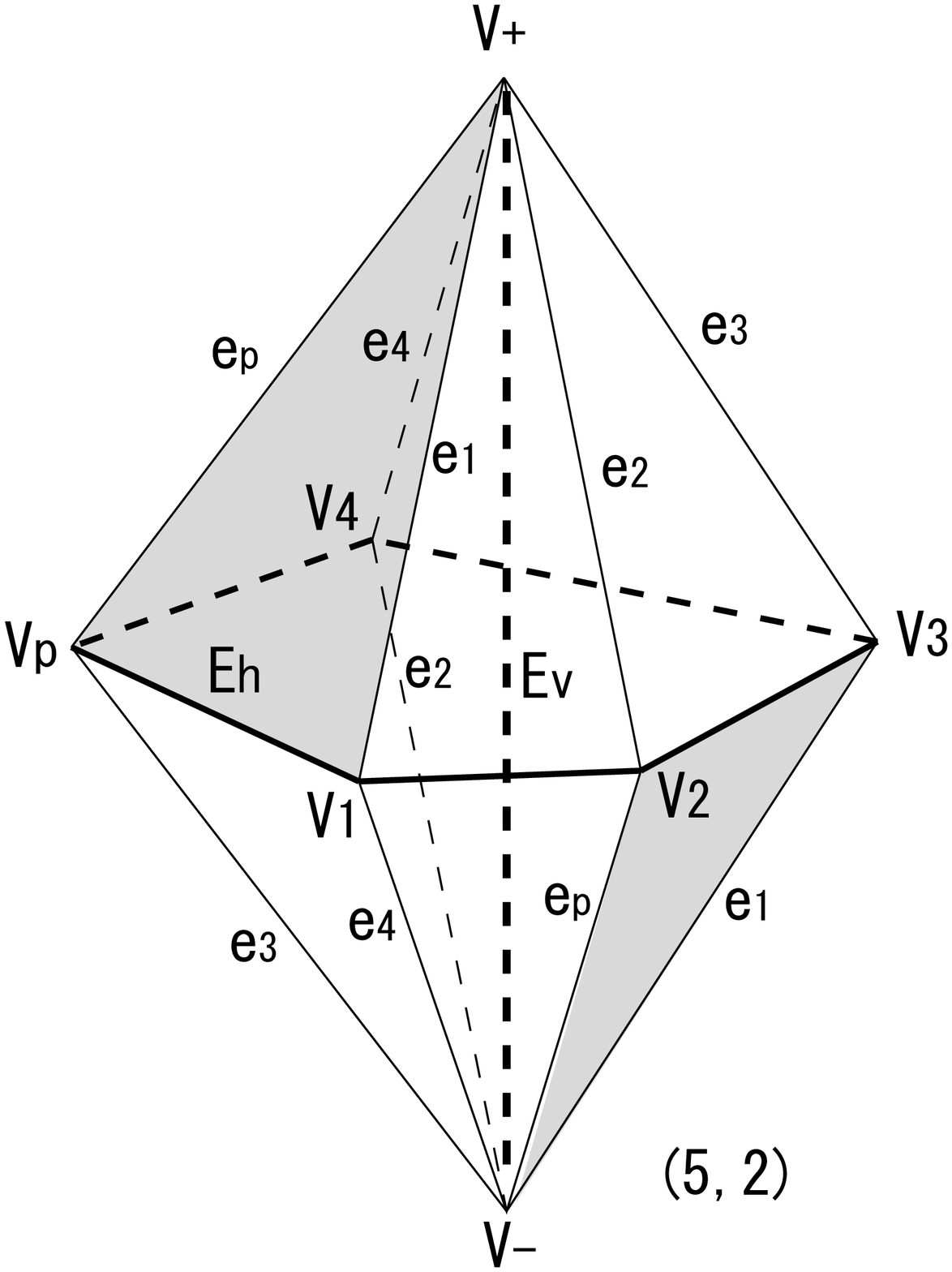}
\end{center}
\caption{}
\label{fig:suspension}
\end{figure}

 Let $p$ and $q$ be positive integers
such that $p$ and $q$ are coprime.
 We can obtain a {\it $(p,q)$-lens space} $L(p,q)$
from a suspension of a $p$-gon 
by gluing each trigonal face in the upper hemisphere 
with that in the lower hemisphere, 
performing ($2\pi q/p$)-rotation 
and taking a mirror image about the equator. 
 See Figure \ref{fig:suspension}.
 Precisely, the trigon $v_+ v_i v_{i+1}$ is glued to $v_- v_{i+q} v_{i+q+1}$, 
where indices are considered modulo $p$. 
 The edge $e_i$ connects $v_+$ and $v_i$, and also $v_-$ and $v_{i+q}$. 
 The horizontal edges connecting $v_i$ and $v_{i+1}$ for $1 \le i \le p$
are all glued up together into an edge $E_h$. 
 Taking an axis $E_v$ connecting the vertices $v_+$ and $v_-$ 
in the suspension, 
we can decompose it into $p$ tetrahedra. 
 This gives a natural triangulation $T(p,q)$ of a $(p,q)$-lens space. 
 The $i$-th tetrahedron $\tau_i$ has 
vertices $v_+$, $v_-$, $v_i$ and $v_{i+1}$. 

 An embedded circle $C$ in a lens space $M$
is called a {\it core} circle
if the exterior $M-$int\,$N(C)$ is homeomorphic to the solid torus.
 Note that each of $E_h$ and $E_v$ forms a core circle of $L(p,q)$.

\begin{figure}[htbp]
\begin{center}
\includegraphics[width=10cm]{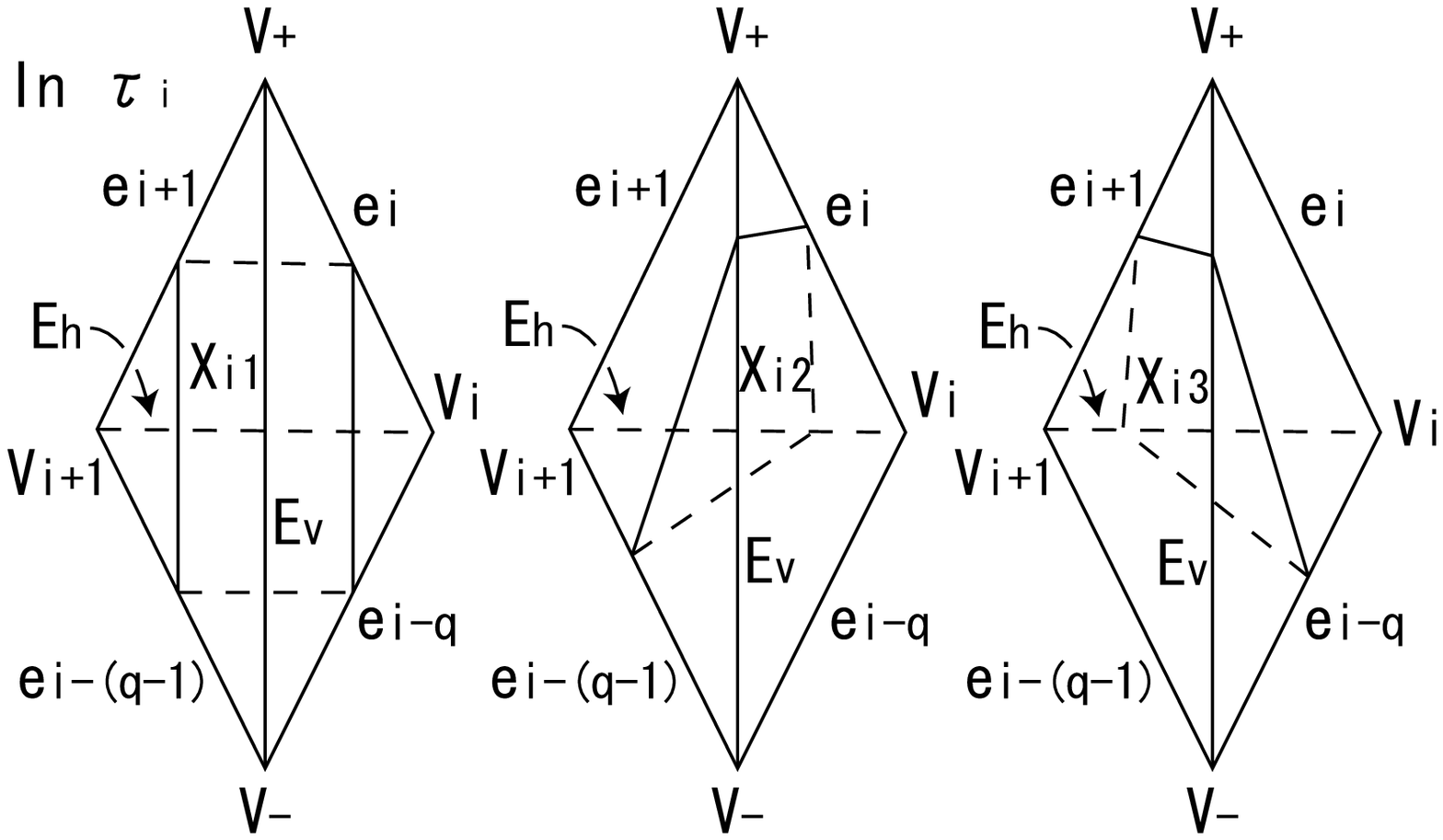}
\end{center}
\caption{}
\label{fig:numberQdisk}
\end{figure}

 In lens spaces, 
non-orientable closed surfaces with maximum Euler characteristics 
are interesting.
 A formula for calculating the maximum Euler characteristic
is given by G. E. Bredon and J. W. Wood in \cite{BW}.

\begin{theorem}\label{theorem:NSheets}
 Let $\{ p_n \}$, $\{ q_n \}$ be infinite sequences of integers
such that $p_0 = 0, q_0 = 1, p_{k+1} = 3p_k + 2q_k$ and $q_{k+1} = p_k + q_k$
for any non-negative integer $k$.
 For $n \ge 2$,
the $(p_n, q_n)$-lens space
contains 
a non-orientable closed surface ${\bf h}_{n-1}$
with maximal Euler characteristic
which is fundamental with respect to Haken's matching equations 
on the triangulation $T(p_n, q_n)$
and homeomorphic to the connected sum of $n$ projective planes
and has $n-2$ sheets of quadrilateral normal disks of type $X_{m1}$
shown in Figure \ref{fig:numberQdisk}
for some $m$.
 The construction of the fundamental surface ${\bf h}_{n-1}$
is descrived in section \ref{section:construction}.
\end{theorem}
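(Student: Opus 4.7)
The plan is to take the explicit construction of ${\bf h}_{n-1}$ from Section \ref{section:construction} as given and verify, in turn, four assertions: (a) it is a well-defined closed normal surface; (b) its Euler characteristic is $2-n$ and it is non-orientable, so it is homeomorphic to the connected sum of $n$ projective planes; (c) this crosscap number is minimal in $L(p_n, q_n)$; and (d) its coordinate vector is a fundamental solution of Haken's matching equations. Throughout, the recursion $p_{k+1} = 3p_k + 2q_k$, $q_{k+1} = p_k + q_k$ should be used inductively to control the arithmetic.

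For (a), we would list the coordinates of ${\bf v}_{{\bf h}_{n-1}}$ tetrahedron by tetrahedron and check, along each normal arc type in each $2$-simplex of $T(p_n, q_n)$, that the matching equations close up; the square condition should be automatic once the construction uses at most one Q-type per tetrahedron. For (b), we would build the CW-structure that ${\bf h}_{n-1}$ inherits from the triangulation and count vertices, edges, and faces to obtain $\chi = 2-n$; non-orientability can be confirmed by tracing an orientation-reversing loop across adjacent normal disks.

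For (c), we would apply the Bredon-Wood formula from \cite{BW}, which expresses the minimal crosscap number in $L(p,q)$ in terms of certain representations of $q$ modulo $p$. The recursion for $(p_n, q_n)$ yields a clean continued-fraction structure from which one reads off this lower bound as exactly $n$, matching the construction in (b).

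The main obstacle is (d). Suppose for contradiction that ${\bf 0} < {\bf v}' < {\bf v}_{{\bf h}_{n-1}}$ for some integral solution ${\bf v}'$ of the matching equations; then ${\bf v}'' := {\bf v}_{{\bf h}_{n-1}} - {\bf v}'$ is also a non-negative integral solution, and we obtain a decomposition of the original vector. The sparse structure of ${\bf v}_{{\bf h}_{n-1}}$, and in particular the fact that the distinguished Q-type $X_{m1}$ appears only $n-2$ times, should severely restrict the possibilities for the Q-coordinates of ${\bf v}'$ in each tetrahedron. Propagating these constraints around the $p_n$ tetrahedra via the matching equations, and combining them with the recursion together with congruential arguments modulo $p_n$ (governing how normal disk counts shift under the identifications in $T(p_n, q_n)$), we would expect to force ${\bf v}' = {\bf 0}$ or ${\bf v}' = {\bf v}_{{\bf h}_{n-1}}$, yielding the desired contradiction.
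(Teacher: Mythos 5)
Your outline is broadly parallel to the paper's strategy for parts (a)--(c), but on the decisive point (d), the fundamentality of ${\bf h}_{n-1}$, your proposed mechanism is both different from the paper's and substantially vaguer, and it is unclear how to close it.

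The paper does not attempt anything like a constraint-propagation argument around the $p_n$ tetrahedra. Instead it isolates a crisp structural criterion (Lemma \ref{lemma:HakenFund}): if a normal surface $F$ in $(L(p,q), T(p,q))$ meets each of the core edges $E_v$ and $E_h$ in exactly one point and contains at least one Q-disk of a type $X_{k2}$ or $X_{k3}$, then $F$ is Haken-fundamental. The proof is short: in a Haken decomposition $F = F_1 + F_2$, one summand (say $F_2$) must miss $E_v \cup E_h$ entirely since $F$ meets each in one point; but the only Q-type in $\tau_i$ disjoint from both edges is $X_{i1}$, so $F_2$ is a stack of copies of the Heegaard torus, contributing $X_{i1}$ disks to every tetrahedron, which then collides with $F$'s $X_{k2}$ or $X_{k3}$ disk by the square condition. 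Your proposal focuses on the sparsity of $X_{m1}$, but $X_{m1}$ is precisely the Heegaard-torus Q-type and is the one that appears with multiplicity $n-2$; it is the presence of some $X_{k2}$ or $X_{k3}$ together with the single-point intersection with both core circles that drives the contradiction, not scarcity of $X_{m1}$. Without the observation that any summand disjoint from the cores must consist entirely of parallel Heegaard tori, your congruential ``propagation'' has no engine and does not obviously terminate in ${\bf v}' = {\bf 0}$ or ${\bf v}' = {\bf v}$.

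Two further gaps. For non-orientability and the Euler characteristic, the paper does not trace orientation-reversing loops or build a CW count; it uses Lemma \ref{lemma:orientability} (a closed surface transverse to a core is non-orientable iff it meets the core in an odd number of points) and tracks $\chi$ through the compression sequence, each surgery adding $2$, giving $\chi = 2 - p_n/2 + \sum_{r=1}^{n-1}(q_{n-r+1}-1) = 2-n$ by the arithmetic identities in Lemma \ref{lemma:formulae}. That bookkeeping via the recursion is what makes (b) feasible; a raw CW count on $T(p_n,q_n)$ would be painful. Finally, your proposal does not address the assertion that ${\bf h}_{n-1}$ has $n-2$ parallel $X_{m1}$ sheets; the paper devotes a separate section (and Lemma \ref{lemma:formulae} (4)) to showing that the compressing operations of all $n-1$ steps hit $\tau_m$ for $m = \sum_{u=1}^{n-1} q_u$, which is the content giving the ``many-sheeted'' phenomenon the theorem is really about.
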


 In \cite{HST}, Hass, Snoeyink and Thurston gave
an example of infinite sequence of polygonal knots $K_n$ in ${\Bbb R}^3$
such that $K_n$ has fewer than $10n+9$ edges
and its piecewise linear triangulated disk spanning $K_n$
contains at least $2^{n-1}$ flat triangles.
 This implies existence of fundamental surfaces 
with huge number of sheets of some type of a normal disk.
 However, no concrete example of such surface was given.

 Fominykh gave a complete description of fundamental surfaces
with respect to certain good handle decompositions
for three manifolds including lens spaces in \cite{F}.
 However, no fundamental surface there has a $2$-handle
with weight more than two.

\begin{conjecture}
 The surface ${\bf h}_{n-1}$, 
which will be constructed in section \ref{section:construction}
and is fundamental with respect to the Haken's matching equations, 
is fundamental also with respect to the Q-matching equations
on the triangulation $T(p_n, q_n)$.
\end{conjecture}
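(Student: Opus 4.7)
The natural approach is by contradiction: assume the Q-projection ${\bf q}:=\pi_Q({\bf h}_{n-1})$ admits a non-trivial decomposition ${\bf q}={\bf q}'+{\bf q}''$ into non-zero non-negative integral Q-matching solutions, so $\mathbf{0}<{\bf q}',{\bf q}''<{\bf q}$, and derive a contradiction with the Haken-fundamentality of ${\bf h}_{n-1}$ already established in Theorem \ref{theorem:NSheets}.

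By the standard fact that every Q-matching solution is the Q-projection of a genuine normal surface, unique up to adding or subtracting vertex-linking T-sphere components, pick normal surfaces $F'$, $F''$ representing ${\bf q}'$, ${\bf q}''$ and write ${\bf F}'$, ${\bf F}''$ for their Haken coordinate vectors. The triangulation $T(p_n, q_n)$ has exactly two vertex classes --- the one containing $v_+$ (identified with $v_-$ by the trigon gluing) and the one containing the $v_i$'s (all identified since $\gcd(p_n,q_n)=1$) --- so the kernel of $\pi_Q$ restricted to Haken solutions is spanned by two vertex-linking sphere classes $V_1, V_2$, and
\[
{\bf F}' + {\bf F}'' \;=\; {\bf h}_{n-1} \;+\; \alpha\,V_1 \;+\; \beta\,V_2
\]
in Haken coordinates for some $\alpha,\beta\in{\Bbb Z}$. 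Absorbing $\alpha V_1+\beta V_2$ into ${\bf F}''$, we may assume ${\bf F}'+{\bf F}''={\bf h}_{n-1}$ on the nose. The plan is then to choose integers $a,b$ so that both ${\bf F}'+a\,V_1+b\,V_2$ and ${\bf h}_{n-1}-{\bf F}'-a\,V_1-b\,V_2$ are non-zero and non-negative, thereby producing a strictly smaller non-negative integral Haken solution below ${\bf h}_{n-1}$ and contradicting Theorem \ref{theorem:NSheets}.

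The crux --- and main obstacle --- is producing such integers $a,b$. A priori, the T-coordinates of ${\bf F}'$ may exceed those of ${\bf h}_{n-1}$ at some corner of some tetrahedron, and subtracting a vertex link to correct one corner may drive another corner negative. The argument therefore demands a precise inventory of the T-disk multiplicities of ${\bf h}_{n-1}$ near each vertex of every $\tau_i$, extracted from the construction in Section \ref{section:construction}; one would have to verify that these T-counts are always large enough to absorb whatever vertex-link shift is forced by making ${\bf F}'$ coordinate-wise non-negative and coordinate-wise below ${\bf h}_{n-1}$. A promising structural input is the recursion $p_{k+1}=3p_k+2q_k$, $q_{k+1}=p_k+q_k$, which suggests an induction on $n$: after peeling off the outermost of the $n-2$ parallel Q-sheets of type $X_{m1}$, any hypothetical non-trivial Q-splitting of ${\bf h}_{n-1}$ should descend to a non-trivial Q-splitting of ${\bf h}_{n-2}$, reducing the conjecture to a handful of base cases that can be checked by direct enumeration or with the aid of normal-surface software.
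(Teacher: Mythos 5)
The statement you set out to prove is stated in the paper as a \emph{conjecture}: the authors do not prove it, and nothing in the paper establishes it. So there is no ``paper's own proof'' to compare your attempt against. What can be assessed is whether your outline constitutes a complete argument, and it does not --- you acknowledge this yourself, flagging the production of the shift integers $a,b$ as ``the crux --- and main obstacle,'' and deferring the closing steps to an unperformed inventory of T-disk multiplicities, an unproved descent claim, and unchecked base cases.

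Beyond being incomplete, the overall strategy has a structural problem. Your plan is to derive a contradiction with the already-established Haken-fundamentality of ${\bf h}_{n-1}$: given a Q-decomposition ${\bf q}={\bf q}'+{\bf q}''$, lift to normal surfaces $F', F''$, then shift by integer multiples of the two vertex-linking spheres $V_1, V_2$ to obtain a nontrivial non-negative Haken decomposition of ${\bf h}_{n-1}$. But Haken-fundamentality does \emph{not} in general imply Q-fundamentality, and the paper itself exhibits the failure within the very family under discussion: the surface ${\bf h}_0$ is proved Haken-fundamental by Lemma~\ref{lemma:HakenFund}, yet the authors note it is \emph{not} Q-fundamental because ${\bf t}_1<{\bf h}_0$ in Q-coordinates. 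In that case the lifted pair ${\bf F}', {\bf F}''$ exists and satisfies ${\bf F}'+{\bf F}'' = {\bf h}_0 + \alpha V_1 + \beta V_2$, but no choice of $a,b$ can make both ${\bf F}'-aV_1-bV_2$ and ${\bf h}_0-({\bf F}'-aV_1-bV_2)$ non-negative and non-zero, since ${\bf h}_0$ genuinely is Haken-fundamental. So the implication you are trying to force is false at the level of generality at which you are arguing, and any successful proof must use detailed information specific to ${\bf h}_{n-1}$ (for $n\ge 2$) that your sketch only gestures at. The proposed induction (``peeling off the outermost of the $n-2$ parallel Q-sheets'' to descend to ${\bf h}_{n-2}$) is also not set up correctly as stated, because ${\bf h}_{n-2}$ lives in the different lens space $L(p_{n-1},q_{n-1})$ with its own triangulation, so a Q-splitting of ${\bf h}_{n-1}$ does not literally ``descend'' to one of ${\bf h}_{n-2}$ without an explicit coordinate dictionary --- which would itself be the hard part. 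In short: the conjecture remains open in the paper, and your proposal is a plausible first line of attack but is not a proof; its central reduction is blocked by the ${\bf h}_0$ counterexample unless supplemented by a concrete, ${\bf h}_{n-1}$-specific argument that is not supplied.
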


 For the definition of Q-matching equation, see \cite{T}.


\section{Preliminaries}

 In this section, 
we introduce two basic lemmas
on closed non-orientable surfaces in lens spaces.

 The next lemma is well-known.
 See ll.24-28 in p.97 in \cite{BW}.

\begin{lemma}\label{lemma:orientability}
 Let $M$ be a lens space, and $C$ a core of it.
 Let $F$ be a closed surface $F$ embedded in $M$
intersecting $C$ transversely.
 Then $F$ is non-orientable 
if and only if it intersects $C$ in odd number of points.
\end{lemma}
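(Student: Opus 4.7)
The plan is to rephrase the parity of $\#(F \cap C)$ as a mod-$2$ intersection pairing on the closed orientable $3$-manifold $M = L(p,q)$ and exploit the special feature $H_2(M;\mathbb{Z}) = 0$. The first step is to note that since $M$ is orientable, a closed embedded $F \subset M$ is non-orientable if and only if it is one-sided: from $TM|_F = TF \oplus \nu_F$ and $w_1(TM|_F) = 0$ one gets $w_1(TF) = w_1(\nu_F)$, so $F$ is orientable if and only if its normal line bundle is trivial. I would then promote this to the chain of equivalences
\[
F \text{ non-orientable} \iff F \text{ one-sided in } M \iff [F]_2 \ne 0 \in H_2(M;\mathbb{Z}/2),
\]
where the last equivalence uses the lens-space hypothesis: an orientable $F$ has a well-defined integral fundamental class in $H_2(L(p,q);\mathbb{Z}) = 0$, so its mod-$2$ reduction $[F]_2$ also vanishes, while a one-sided $F$ admits a loop $\gamma$ crossing it transversely in exactly one point (run out along a normal direction and close up through the twisted $I$-bundle neighbourhood of $F$), so the mod-$2$ intersection pairing gives $[F]_2 \cdot [\gamma]_2 = 1$ and hence $[F]_2 \ne 0$.

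Next I would apply the geometric description of the mod-$2$ intersection pairing for transversely meeting cycles,
\[
\#(F \cap C) \bmod 2 \;=\; [F]_2 \cdot [C]_2 \in \mathbb{Z}/2,
\]
together with mod-$2$ Poincar\'e duality, which makes the bilinear form $H_2(M;\mathbb{Z}/2) \times H_1(M;\mathbb{Z}/2) \to \mathbb{Z}/2$ non-degenerate. Because $C$ generates $H_1(M;\mathbb{Z}) = \mathbb{Z}/p$, its reduction $[C]_2$ generates $H_1(M;\mathbb{Z}/2) \cong \mathbb{Z}/\gcd(p,2)$. When $p$ is even, both $H_1(M;\mathbb{Z}/2)$ and $H_2(M;\mathbb{Z}/2)$ are $\mathbb{Z}/2$ with $[C]_2$ a generator of $H_1$, so non-degeneracy yields $[F]_2 \ne 0 \iff [F]_2 \cdot [C]_2 = 1 \iff \#(F \cap C)$ is odd, and combining with the first step gives the lemma. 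When $p$ is odd, $H_2(M;\mathbb{Z}/2) = 0$ forces every closed surface to be orientable and $[C]_2 = 0$ forces every $\#(F \cap C)$ to be even, so both sides of the claimed equivalence are vacuously false.

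The only genuinely non-trivial point is the third member of the chain of equivalences above, namely that in $L(p,q)$ non-orientability of $F$ is the same as non-vanishing of $[F]_2 \in H_2(M;\mathbb{Z}/2)$; once this is in hand, the remainder is a standard application of mod-$2$ Poincar\'e duality, which is why the lemma can be cited as a remark in \cite{BW}.
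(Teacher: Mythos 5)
Your argument is correct and is the standard homological proof; the paper itself does not give a proof but only cites Bredon--Wood, and your reasoning (non-orientable $\iff$ one-sided $\iff [F]_2\ne 0$ in $H_2(L(p,q);\mathbb{Z}/2)$, then mod-$2$ Poincar\'e duality against the core $C$, which generates $H_1$) is essentially the content of the passage they reference. The only implicit assumption worth flagging is that $F$ is connected, since for a disconnected surface the equivalence ``non-orientable $\iff [F]_2\ne 0$'' can fail (two one-sided components would sum to zero in $\mathbb{Z}/2$-homology); this is the standard reading of the lemma and matches how it is used in the paper.
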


\begin{lemma}\label{lemma:HakenFund}
 Let $F$ be a normal surface 
in the $(p,q)$-lens space
with the triangulation $T(p,q)$.
 If $F$ intersects each of $E_v$ and $E_h$ in a single point
and contains a normal disk of type $X_{k2}$ or $X_{k3}$,
then $F$ is fundamental with respect to Haken's matching equations.
 In particular, $F$ is connected.
\end{lemma}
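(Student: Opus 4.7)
The plan is to argue by contradiction. Suppose ${\bf v}_F = {\bf v}_1 + {\bf v}_2$ for two non-zero non-negative integral solutions of Haken's matching equations with ${\bf v}_2 < {\bf v}_F$. Because $F$ obeys the square condition and each summand is componentwise bounded above by ${\bf v}_F$, both ${\bf v}_1$ and ${\bf v}_2$ automatically obey the square condition and hence correspond to genuine normal surfaces $F_1, F_2$ with $F = F_1 + F_2$.

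The key observation is that $|F \cap E_v|$ equals the total number of corners of the normal disks of $F$ on the edges $v_+ v_-$ of the tetrahedra $\tau_i$, and likewise $|F \cap E_h|$ counts corners on the edges $v_i v_{i+1}$, so intersection numbers with each core circle are additive under the normal sum. Since $|F \cap E_v| = |F \cap E_h| = 1$ and the Q-disk of type $X_{k2}$ or $X_{k3}$ contributes exactly one corner to each of $E_v$ and $E_h$, this Q-disk must lie entirely in one summand; without loss of generality, in $F_1$. Then $F_2$ is disjoint from both core circles. A normal disk avoids both $E_v$ and $E_h$ only if it is a Q-disk of type $X_{i1}$: every T-disk meets $E_v$ or $E_h$ through an incident vertex, and both $X_{i2}$ and $X_{i3}$ cross both core circles. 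So $F_2$ is built exclusively from $X_{i1}$-type Q-disks. A routine check of Haken's matching equation on the lateral face $v_+ v_- v_i$, applied to the arc separating $v_i$ from $\{v_+, v_-\}$, forces $x_{i-1,1}(F_2) = x_{i,1}(F_2)$ for every $i$ (the suspension-face equations are automatically consistent with this); hence all these counts equal a common value $a$, and if $F_2$ is non-empty then $a \ge 1$ and $F_2$ is $a$ parallel copies of the Heegaard torus bounding a regular neighborhood of $E_v$.

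The contradiction now comes from applying the square condition to $F$ itself in $\tau_k$: since $\tau_k$ already carries a Q-disk of type $X_{k2}$ or $X_{k3}$, we have $x_{k,1}(F) = 0$, and componentwise domination $F_2 \le F$ forces $a = x_{k,1}(F_2) \le 0$, so $a = 0$, contradicting $a \ge 1$. Hence no such splitting exists and $F$ is fundamental; connectedness follows at once, since a disjoint union of two nonempty normal surfaces would itself split ${\bf v}_F$ nontrivially. The most delicate step in execution is the matching-equation analysis that identifies $F_2$ as a multiple of the Heegaard torus, where one needs to correctly read off, face by face, the arc types cut out by each $X_{i1}$ on both the lateral faces $v_+ v_- v_i$ and the glued suspension faces.
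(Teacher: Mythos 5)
Your proposal is correct and follows essentially the same path as the paper: decompose $F=F_1+F_2$, show that $F_2$ must avoid both $E_v$ and $E_h$ and therefore consist entirely of $X_{i1}$-type quadrilaterals forming parallel Heegaard tori, and then contradict the square condition in $\tau_k$. You are in fact a bit more careful than the published proof in two spots, namely in pointing out that the single $X_{k2}$- or $X_{k3}$-disk already exhausts both intersection points (so $F_2$ avoids both core circles, not merely one) and in making the matching-equation step that equates all $x_{i,1}(F_2)$ explicit.
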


\begin{proof}
 Suppose, for a contradiction,
that $F$ is decomposed as $F = F_1 + F_2$.
 Since $F$ intersects $E_v$ and $E_h$ in a single point,
one of $F_1$ or $F_2$, say $F_1$ intersects $E_v$ and $E_h$ in a single point,
and $F_2$ is disjoint from $E_v \cup E_h$.
 Since $X_{i1}$ is the only type of normal disk in $\tau_i$
which is disjoint from $E_v \cup E_h$ for all $i \in \{ 1, 2, \cdots, p\}$,
the surface $F_2$
intersects each tetrahedron $\tau_i$
in copies of the quadrilateral $X_{i1}$.
 Hence $F_2$ is a union of $n$ parallel copies 
of the Heegaard splitting torus surrounding $E_v$ and $E_h$
for some positive integer $n$.
 $F_2$ intersects each $\tau_i$ in $n$ sheets of normal disks
of type $X_{i1}$.
 This implies that $F \cap \tau_i$
contains a normal disk of type $X_{i1}$ for each $i \in \{ 1, 2, \cdots, p\}$.
 By assumption, 
for some $k \in \{ 1, 2, \cdots, p \}$
$F$ has a normal disk of type $X_{k2}$ or $X_{k3}$
which cannot exist together with a normal disk of $X_{k1}$
by the square condition.
 This is a contradiction.
\end{proof}


\section{Construction of surfaces}\label{section:construction}

 In this section, we construct the fundamental surface ${\bf h}_{n-1}$ 
in Theorem \ref{theorem:NSheets}.

 Tollefson introduced Q-coordinates representing normal surfaces in \cite{T}. 
 We first number types of Q-disks. 
 In our case, 
we number the $3$-types of Q-disks $i1$, $i2$ and $i3$ in $\tau_i$
as in Figure \ref{fig:numberQdisk}, 
where the axis $E_v$ is in front of the tetrahedron. 
 $X_{i1}$ separates the edges $E_h$ and $E_v$,
$X_{i2}$ separates $e_{i+1}$ and $e_{i-q}$
and $X_{i3}$ does $e_i$ and $e_{i-(q-1)}$.


 Let $x_{ij}$ be the number of Q-disks of type $X_{ij}$
contained in a normal surface $F$. 
 Then we place them in a vertical line 
in the order of indices lexicographically, 
to obtain a {\it Q-coordinate} 
${\bf v}_Q (F) = {}^t (x_{11}, x_{12}, x_{13}, x_{21}, x_{22}, x_{23}, 
x_{31}, \cdots, x_{t1}, x_{t2}, x_{t3} )$ of $F$,
where $t$ is the number of tetrahedra of the triangulation. 
 A normal surface with the Q-coordinate ${\bf 0}$
is called {\it trivial}.
 It is composed of trigonal normal disks
and has no quadrilateral normal disks,
and hence is 
a disjoint union of $2$-spheres
each of which surrounds a vertex of the triangulation.
 Any non-zero solution of the system of Q-matching equations
with non-negative integer elements
determines a normal surface with no trivial component uniquely.
 In addition, for any normal surface $F$ with no trivial component,
and for any normal surface $F'$ with ${\bf v}_Q (F') = {\bf v}_Q (F)$,
there is a trivial normal surface $\Sigma$
such that $\Sigma$ is disjoint from $F$ 
and $F'= F + \Sigma$
(Theorem 1 in \cite{T}).
 He introduced there the Q-matching equations
on numbers of sheets of normal disks of Q-disk types
such that 
the set of non-trivial non-negative integral solutions 
satisfying the square condition
is in one to one correspondence
with the set of normal surfaces with no trivial component in the $3$-manifold.

 In our situation of $(p,q)$-lens space, 
the $(3i-2)$-nd, $(3i-1)$-st and $(3i)$-th elements 
$x_{i1}, x_{i2}, x_{i3}$ of a Q-coordinate 
together form the $i$-th {\it block} for $1\le i \le p$. 
 We often put a line $\lq\lq |$" instead of a comma
between every adjacent pair of blocks
in such a manner as 
${}^t (x_{11}, x_{12}, x_{13} \ |\ x_{21}, x_{22}, x_{23} 
\ |\ \cdots \ |\ x_{p1}, x_{p2}, x_{p3})$.

 In the previous paper,
we have shown the next lemma.

\begin{lemma}\label{lemma:generator} {\rm (Lemma 1.3 in \cite{IH})}
 For the triangulation $T(p,q)$ of the $(p,q)$-lens space
with $p \ge 5$, $2 \le q < p/2$ and $GCM(p,q)=1$, 
the vectors
${\bf s}_1, {\bf s}_2, \cdots, {\bf s}_p$,
${\bf t}_1, {\bf t}_2, \cdots, {\bf t}_p$ as below
form a basis of the solution space in ${\Bbb R}^{3t}$
of the system of Q-matching equations.
\newline
 $($The $j$-th block of ${\bf s}_i) = \left\{ 
\begin{array}{l}
{}^t(1,1,1)\ \ {\rm if}\ j=i \\
{}^t(0,0,0)\ \ {\rm otherwise}
\end{array} \right.$
\newline
 $($The $j$-th block of ${\bf t}_i)= \left\{ 
\begin{array}{l}
{}^t(0,1,0)\ \ {\rm if}\ j=i\ {\rm or}\ i+q+1 \\
{}^t(0,0,1)\ \ {\rm if}\ j=i+1\ {\rm or}\ i+q \\
{}^t(0,0,0)\ \ {\rm otherwise}
\end{array} \right.$

 Hence a general solution ${\bf v}$ in ${\Bbb R}^{3t}$
is presented as below. 
\newline
${\bf v}=a_1 {\bf s}_1 + \cdots +a_p {\bf s}_p 
+ b_1 {\bf t}_1 + \cdots + b_p {\bf t}_p$
\newline
$= 
{}^t (a_1,\ a_1+b_1+b_{p-q},\ a_1+b_p+b_{p-q+1} \ | \cdots |\ 
a_i,\ a_i+b_i+b_{p-q+i-1},\ a_i+b_{i-1}+b_{p-q+i} \ | \cdots $
\newline
\hspace*{2cm}
$\cdots |\ a_p,\ a_p+b_p+b_{p-q-1},\ a_p+b_{p-1}+b_{p-q})$,
\newline
with $a_1, \cdots, a_p, b_1, \cdots, b_p\in {\Bbb R}$,
where 
${}^t (a_i,\ a_i+b_i+b_{p-q+i-1},\ a_i+b_{i-1}+b_{p-q+i})$
is the $i$-th block.
\end{lemma}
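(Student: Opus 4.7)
The plan is to verify that each of the $2p$ vectors is a solution of the Q-matching equations, show that they are linearly independent, and finally argue by a dimension count that they span the solution space $V_A$.

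As a first step I would list the $p+2$ edges of $T(p,q)$---namely $E_v$, $E_h$, and $e_1,\ldots,e_p$---and write the Q-matching equation associated with each explicitly, using Figure~\ref{fig:numberQdisk} and the face identification $v_+ v_i v_{i+1} \leftrightarrow v_- v_{i+q}v_{i+q+1}$ to record which Q-disks meet each edge and on which side. The check that each ${\bf s}_i$ is a solution is then immediate: its support lies in the single tetrahedron $\tau_i$, and along any edge of $\tau_i$ the two meeting Q-disks sit on opposite sides in the link, so their $\pm 1$ contributions cancel. The check for ${\bf t}_i$ is a local, edge-by-edge computation: its support consists of the four Q-disks $X_{i,2}$, $X_{i+1,3}$, $X_{i+q,3}$, $X_{i+q+1,2}$, and one verifies directly that their signed contributions cancel at every edge they touch. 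The index shifts $+q$ and $+1$ reflect the fact that crossing $E_h$ steps from $\tau_i$ to $\tau_{i+1}$ while crossing $E_v$ shifts the index by $q$; the hypotheses $p\ge 5$ and $2\le q<p/2$ guarantee that $i$, $i+1$, $i+q$, $i+q+1$ are all distinct modulo $p$, avoiding self-overlap.

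For linear independence, no ${\bf t}_j$ has a nonzero first-of-block entry, so the coefficient $a_i$ of any linear combination is read directly from its $x_{i1}$-coordinate. After subtracting off $\sum a_i {\bf s}_i$, the residue is a combination of the ${\bf t}_j$'s whose $x_{i2}$-coordinate is $b_i + b_{p-q+i-1}$ and whose $x_{i3}$-coordinate is $b_{i-1} + b_{p-q+i}$; using $\gcd(p,q)=1$ to traverse the indices cyclically, one recovers $b_1,\ldots,b_p$ uniquely. To show these span, I would count $\dim V_A$: there are $3p$ unknowns and $p+2$ edge equations, and one identifies two linear dependencies among the equations---one from the fact that the signed corners of a single Q-disk sum to zero around the four edges it meets, so the sum of all equations vanishes, and a second coming from the vertex-class structure of $T(p,q)$---bringing the rank down to $p$ and the nullity up to $2p$.

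The main obstacle will be the sign bookkeeping in the first step: the equation for the slanted edge $e_i$ gathers Q-disks from all four tetrahedra around $e_i$, and correctly reading off which type of Q-disk in each adjacent tetrahedron contributes with which sign---particularly across the equatorial gluing $v_i \leftrightarrow v_{i+q}$---is where index shifts can easily be mishandled. Once that local enumeration is laid out, the verification of the ${\bf s}_i$ and ${\bf t}_i$ as solutions and the linear independence argument proceed routinely.
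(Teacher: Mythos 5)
The paper does not prove this lemma at all: it is stated with the attribution ``(Lemma 1.3 in \cite{IH})'' and the reader is referred to the authors' earlier preprint, so there is no internal proof here to compare against. Judged on its own terms, your plan is reasonable in outline --- verify that ${\bf s}_i$ and ${\bf t}_i$ satisfy the Q-matching equations, prove linear independence, and finish with a dimension count --- and the independence argument you sketch (read $a_i$ off the first-of-block coordinate $x_{i1}$, then solve the circulant system $x_{i2}=b_i+b_{p-q+i-1}$, $x_{i3}=b_{i-1}+b_{p-q+i}$ using $\gcd(p,q)=1$ and parity) is correct. The enumeration of the $p+2$ edges $E_v,E_h,e_1,\dots,e_p$ is also right.

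The genuine gap is in the spanning step. Exhibiting two linear dependencies among the $p+2$ Q-matching equations only shows $\mathrm{rank}\le p$, hence $\dim V_A\ge 2p$; but you already know $\dim V_A\ge 2p$ from the $2p$ independent solutions, so this adds nothing. What you need is the reverse inequality $\dim V_A\le 2p$, i.e.\ $\mathrm{rank}\ge p$, and that requires showing that after discarding two of the $p+2$ equations the remaining $p$ are linearly independent (or, equivalently, showing directly that every Q-solution lies in the span of the ${\bf s}_i,{\bf t}_j$ by checking that the overdetermined circulant system for the $b_j$ is always consistent when the Q-matching equations hold). As written, ``bringing the rank down to $p$'' conflates an upper bound on the rank with an exact value. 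A secondary weakness is that your ``second dependency coming from the vertex-class structure'' is asserted but not identified; since $T(p,q)$ has two vertex classes, there is plausibly such a relation, but you would need to write it down and confirm it is independent of the ``sum of all equations'' relation, and then separately establish that no further relations exist.
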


 When $p$ is even and $q \ge 3$,
in the $(p,q)$-lens space
the normal surface represented by

$${\bf h}_0 = (\sum_{m=1}^{p/2} {\bf t}_{2m-1})/2 = 
{}^t (0 \ 1 \ 0 | 0 \ 0 \ 1 | 0 \ 1 \ 0 | 0 \ 0 \ 1 |
\cdots | 0 \ 1 \ 0 | 0 \ 0 \ 1 )$$ 

\noindent
is non-orientable by Lemma \ref{lemma:orientability}
and fundamental by Lemma \ref{lemma:HakenFund}.
 In fact, this surface 
consists of only Q-disks of types $X_{k2}$ and $X_{\ell 3}$
for any odd integer $k$ and any even integer $\ell$,
and intersects each of $E_h$ and $E_v$ in a single point.
 However, ${\bf h}_0$ is not Q-fundamental 
because it is larger than ${\bf t}_1$.
 In fact, it is of Euler characteristic $2 - p/2$,
which is not maximal among all closed non-orientable surfaces 
in a $(p,q)$-lens space with $p$ even and $q \ge 3$
by \cite{BW}.
 We will perform compressing operations 
on the surface represented by ${\bf h}_0$
to obtain one of maximal Euler characteristic.

 In what follows,
we consider the infinite sequence of $(p_n, q_n)$-lens spaces,
where $p_0=0$, $q_0=1$,
$p_n = 3 p_{n-1} + 2 q_{n-1}$
and $q_n = p_{n-1} + q_{n-1}$,
which are derived from
$\dfrac{p_n}{q_n} = 
2+\dfrac{1}{1+\dfrac{1}{(\dfrac{p_{n-1}}{q_{n-1}})}}$
and $\dfrac{p_1}{q_1} = \dfrac{2}{1}$.
 Thus $(p_1,q_1) = (2,1), \ (p_2,q_2) = (8,3),\ 
(p_3,q_3)=(30,11), \ (p_4,q_4)=(112, 41), \ 
(p_5,q_5)=(418,153), \ (p_6,q_6)=(1560,571), \cdots$.
 Note that $p_n$ is even for any natural number $n$.
 We will obtain some formulae 
on this infinite sequences $\{ p_n \}$, $\{ q_n \}$
in Lemma \ref{lemma:formulae} 
in section \ref{section:formulae}.

 In the case of $n \ge 2$,
we perform $n-1$ steps of compressing operations on ${\bf h}_0$
to obtain a desired closed non-orientable surface ${\bf h}_{n-1}$ of maximal Euler characteristic.
 The $k$-th step is composed of $(q_{(n-(k-1))} - 1)/2$ compressing operations.


 For $1 \le i \le (q_{n}-1)/2$,
the $i$-th compressing disk of the first step
is placed in the union of four tetrahedra 
$\tau_{2i-1}, \tau_{2i}, \tau_{q_n + 2i -1}$ and $\tau_{q_n + 2i}$.
 It intersects each tetrahedron in a single trigonal disk patch.
 We call the pair of patches in $\tau_{2i-1} \cup \tau_{2i}$ 
the first pair (see Figure \ref{fig:TCprDiskPatch} (a)),
and that in $\tau_{q_n + 2i-1} \cup \tau_{q_n+2i}$ 
the second (see Figure \ref{fig:TCprDiskPatch} (b)).
 Each pair is composed of 
the leading trigonal disk patch in the tetrahedron 
assigned the smaller number ($\tau_{2i-1}, \tau_{q_n+2i-1}$)
and the following one in that assigned the larger number
($\tau_{2i}, \tau_{q_n+2i})$.

\begin{figure}[htbp]
\begin{center}
\includegraphics[width=10cm]{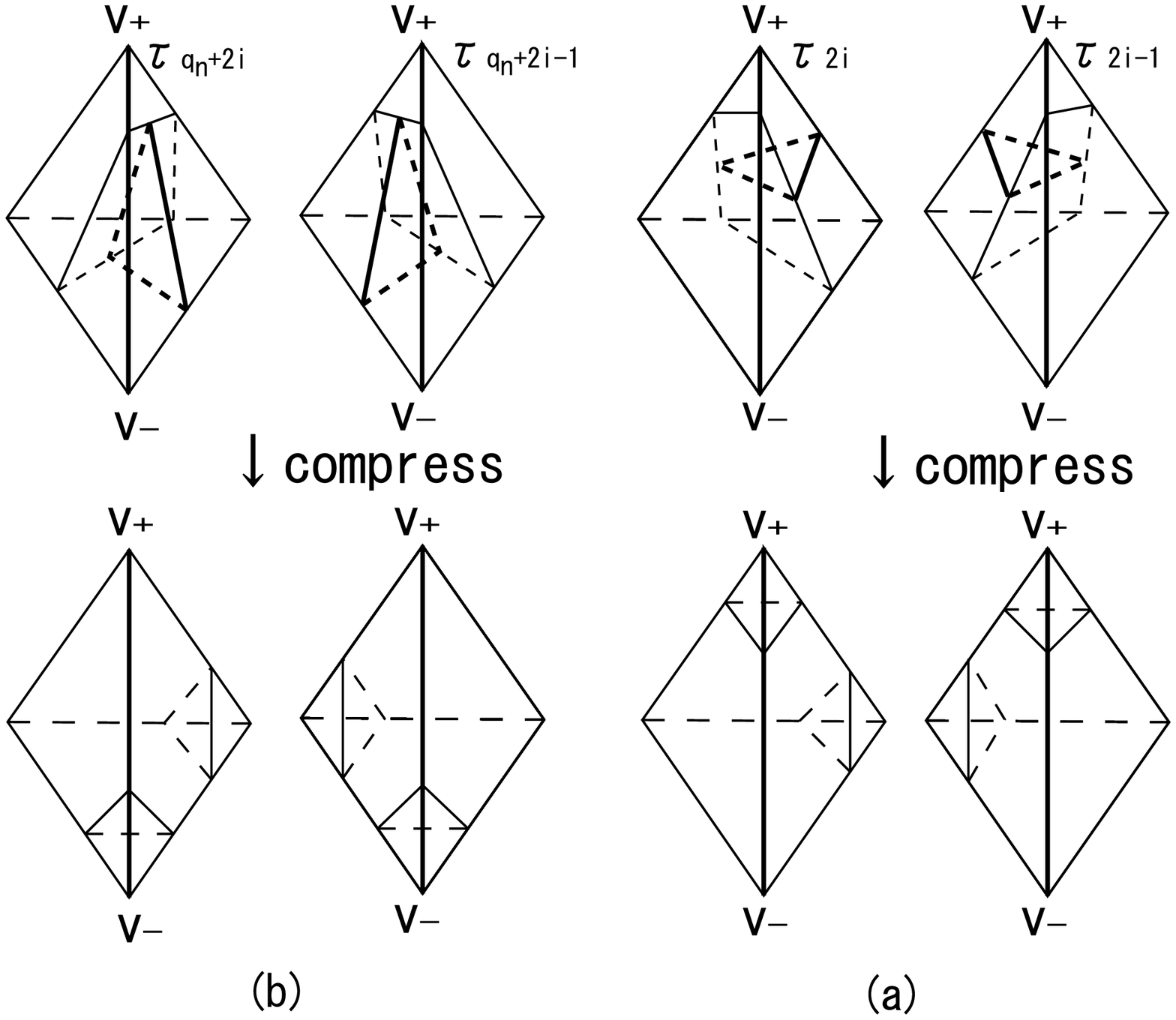}
\end{center}
\caption{}
\label{fig:TCprDiskPatch}
\end{figure}


 The compressions of the first step deform the Q-coordinate ${\bf h}_0$ into

$${\bf h}_1 = {\bf h}_0 - \sum_{i=1}^{(q_n -1)/2} {\bf t}_{2i-1}.$$

\noindent
Subtracting ${\bf t}_{2i-1}$ corresponds 
to the surgery along the $i$-th compressing disk.
 In fact, the four Q-disks of ${\bf t}_{2i-1}$ are deformed to T-disks 
by the $i$-th surgery as shown in Figure \ref{fig:TCprDiskPatch}.
 A single compression increases the Euler characteristic of the surface by two.
 Hence we obtain a closed surface 
of Euler characteristic $2 - p_n /2 + q_n -1$.
 The resulting surface is non-orientable 
and fundamental with respect to Haken's matching equations
by Lemmas \ref{lemma:orientability} and \ref{lemma:HakenFund}.

 See Figure \ref{fig:ComprDisk8-3} 
where described the compressing disks for the case of $(p_2, q_2) = (8,3)$.
 We obtain a closed non-orientable surface as in Figure \ref{fig:NonOriIn83}
after the compression in the first step.
 This is a Klein bottle, 
which is of maximal Euler characteristic
among all closed non-orientable surfaces in the lens space $L(8,3)$.

\begin{figure}[htbp]
\begin{center}
\includegraphics[width=7cm]{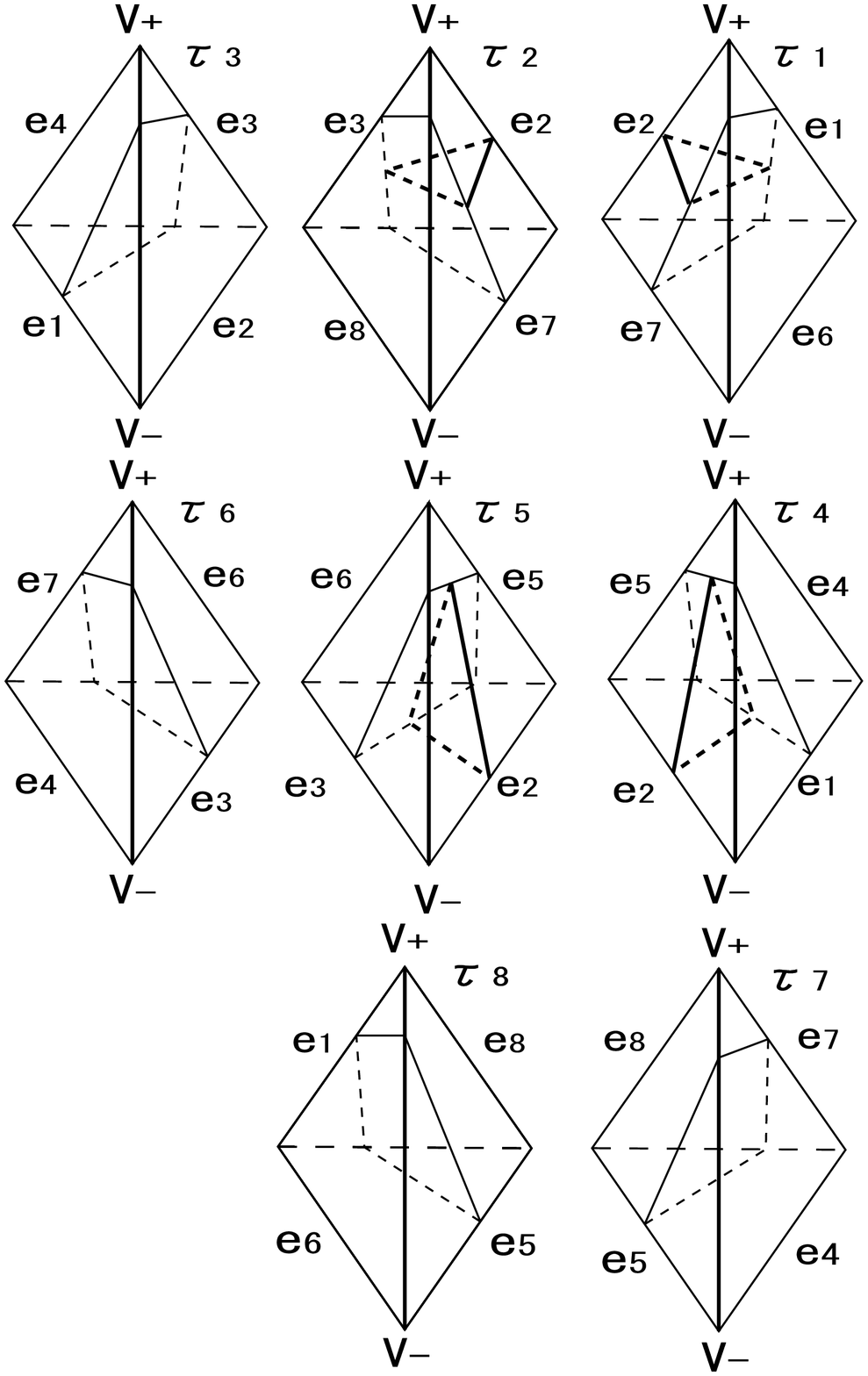}
\end{center}
\caption{}
\label{fig:ComprDisk8-3}
\end{figure}

\begin{figure}[htbp]
\begin{center}
\includegraphics[width=7cm]{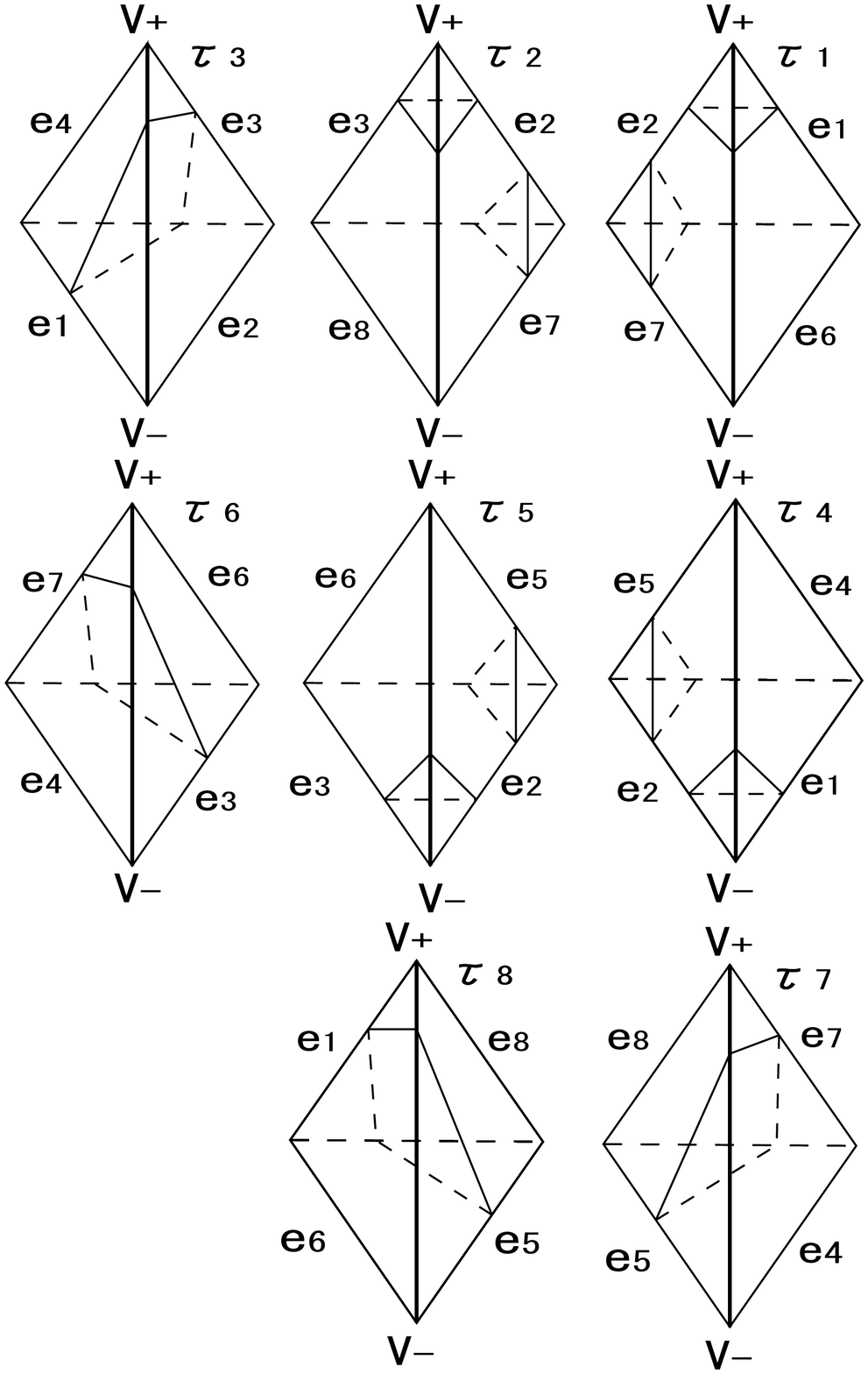}
\end{center}
\caption{}
\label{fig:NonOriIn83}
\end{figure}


 We call the union of the first $q_n$ tetrahedra $\tau_1, \tau_2, \cdots, \tau_{q_n}$
and the union of corresponding $q_n$ blocks of the Q-coordinate
{\it the first region},
the union of the second $q_n$ tetrahedra $\tau_{q_n + 1}, \tau_{q_n + 2}, \cdots, \tau_{2q_n}$
and the union of corresponding $q_n$ blocks
{\it the second region},
the union of remainder $p_{n-1}$ tetrahedra $\tau_{2q_n+1}, \tau_{2q_n+2}, \cdots, \tau_{p_n}$
and the union of corresponding $p_{n-1}$ blocks
{\it the last region}.
 In fact, the last region is composed of 
$p_n - 2q_n = (3p_{n-1}+2q_{n-1}) - (p_{n-1}+q_{n-1}) = p_{n-1}$ tetrahedra or blocks.

 Compressing operations of the first step have occurred
in all the tetrahedra corresponding to the first and the second regions
but in the ending tetrahedra $\tau_{q_n}$ and $\tau_{2q_n}$.


 When $n \ge 3$,
we perform the second step of compressions.
 This operation is performed 
along $(q_{n-1}-1)/2$ compressing disks.
 The $i$-th compressing disk is composed of four pairs of disk patches.
 The first pair consists of two trigonal disk patches 
in $\tau_{2q_n + 2i - 1} \cup \tau_{2q_n + 2i}$ 
similar as those described in Figure \ref{fig:TCprDiskPatch} (a),
the second pair consists of quadrilateral disk patches 
in $\tau_{3q_n + 2i -1} \cup \tau_{3q_n + 2i}$
as described in Figure \ref{fig:QCprDiskPatch} (a),
the third pair quadrilateral disk patches
in $\tau_{4q_n + 2i -1} \cup \tau_{4q_n + 2i}$
as described in Figure \ref{fig:QCprDiskPatch} (b),
and the last pair trigonal disk patches 
in $\tau_{5q_n + 2i -1} \cup \tau_{5q_n + 2i}$
similar as those described in Figure \ref{fig:TCprDiskPatch} (b).
 Because $2q_n + 2i -1 > 2q_n$ and 
$2q_n + 2i \le 2q_n + 2 \cdot (q_{n-1}-1)/2 
= 2q_n + q_{n-1} < 2q_n + p_{n-1} = p_n$,
the first pair of two trigonal disk patches are in the last region,
where compressions of the first step have not been performed.
 Since 
\newline
$3q_n+2i-1 
= 3(p_{n-1}+q_{n-1})+2i-1
= (3p_{n-1}+2q_{n-1})+q_{n-1}+2i-1$
\newline
$= p_n + q_{n-1} + 2i -1
\equiv q_{n-1}+2i-1$ (mod. $p_n$),
\newline
the second pair of quadrilateral disk patches are 
in  $\tau_{(q_{n-1}+2i-1)} \cup \tau_{(q_{n-1}+2i)}$.
 In these tetrahedra, compressions of the first step have already occurred.
 In fact, 
$q_{n-1} + 2i -1 \ge 1$ and
$q_{n-1} +2i \le q_{n-1} + 2 \{ (q_{n-1}-1)/2 \} 
= 2q_{n-1}-1 < p_{n-1} + q_{n-1} = q_n$,
and hence the tetrahedra are in the first region,
and are not the ending ones.
 The third pair of quadrilateral disk patches are contained
in  the $(q_{n-1}+2i-1)$-st and the $(q_{n-1}+2i)$-th tetrahedra
in the second region, 
where compressions of the first step have been performed.
 The last pair of trigonal patches are
in the $(q_{n-1}+2i-1)$-st and the $(q_{n-1}+2i)$-th tetrahedra in the last region,
i.e., in $\tau_{2q_n + q_{n-1} + 2i -1} \cup \tau_{2q_n + q_{n-1} + 2i}$, 
where compressions of the first step have not occurred.
 Moreover, the first pairs of disk patches 
and the last ones do not appear in the same tetrahedron
because 
$2q_n + q_{n-1} + 2i' - 1 \ge 2q_n + q_{n-1} + 1 > 2q_n + q_{n-1} -1 \ge 2q_n + 2i$
for the leading one of the last pair of disk patches 
of the $i'$-th compressing disk
and the following one of the $1$st pair of disk patches 
of the $i$-th compressing disk.

\begin{figure}[htbp]
\begin{center}
\includegraphics[width=10cm]{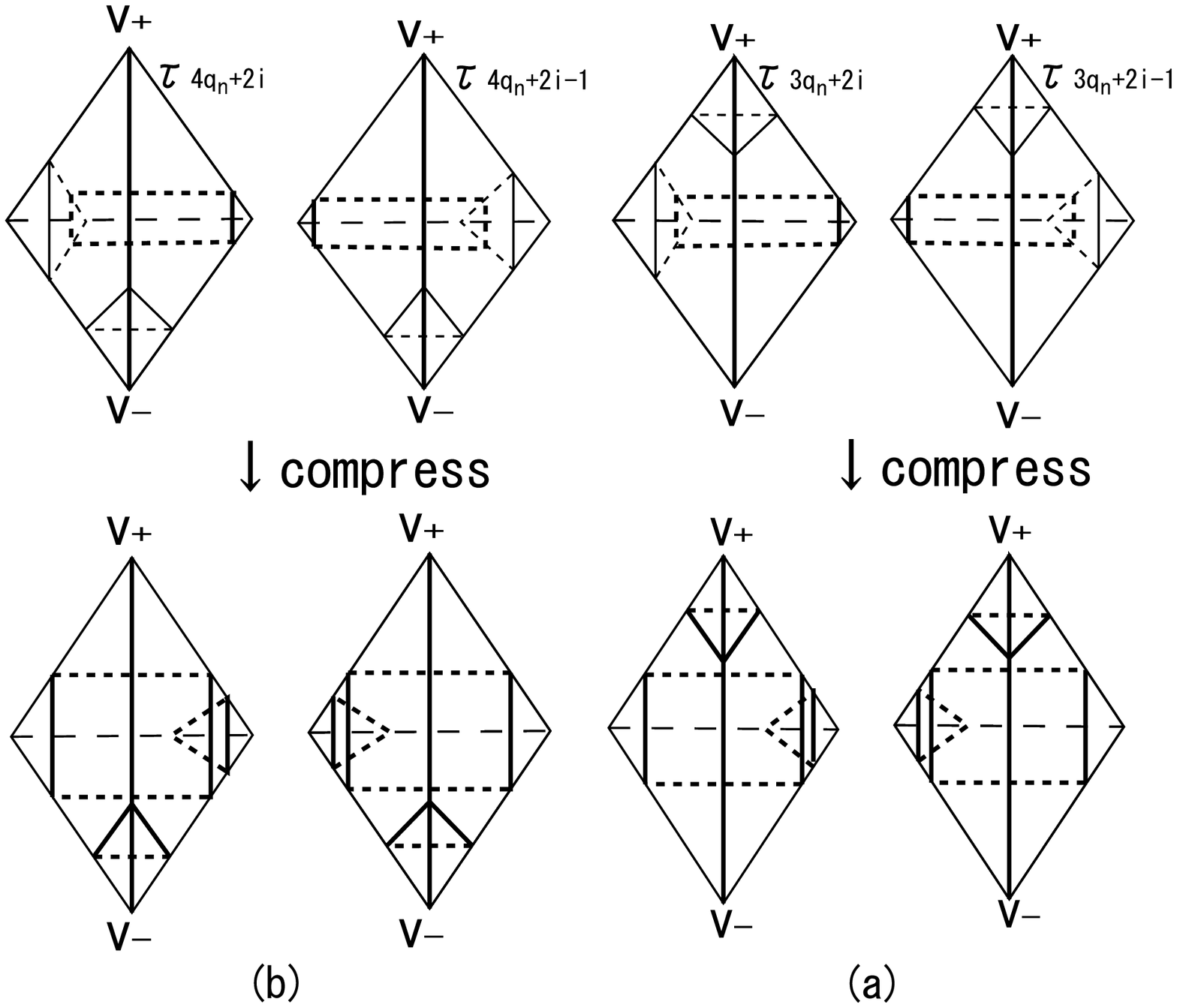}
\end{center}
\caption{}
\label{fig:QCprDiskPatch}
\end{figure}

 See Figure \ref{fig:CprDisk30-11}
where described the compressing disks of the $2$nd step 
for the case of $(p_3, q_3) = (30,11)$.

 After the second step Q-coordinate will be

$${\bf h}_2 = 
{\bf h}_1 
-\sum_{i=1}^{(q_{n-1} -1)/2} 
({\bf t}_{2q_n + 2i-1} + {\bf t}_{3q_n + 2i-1} + {\bf t}_{4q_n + 2i-1}
- {\bf s}_{3q_n + 2i -1} - {\bf s}_{3q_n + 2i}
- {\bf s}_{4q_n + 2i-1} - {\bf s}_{4q_n + 2i}).$$

 Subtracting 
${\bf t}_{2q_n + 2i-1} + {\bf t}_{3q_n + 2i-1} + {\bf t}_{4q_n + 2i-1}
- {\bf s}_{3q_n + 2i -1} - {\bf s}_{3q_n + 2i}
- {\bf s}_{4q_n + 2i-1} - {\bf s}_{4q_n + 2i}$
corresponds to the surgery along the $i$-th compressing disk.
 In fact,
after surgery on the $i$-th compressing disk,
four Q-disks in $\tau_{2q_n+2i-1} \cup \tau_{2q_n+2i} \cup \tau_{5q_n+2i-1} \cup \tau_{5q_n+2i}$
are changed to T-disks 
by deformations along the first and the last pairs of trigonal disk patches.
 A Q-disk disjoint from $E_h \cup E_v$ is added
in 
$\tau_{3q_n+2i-1} \cup \tau_{3q_n+2i} 
\cup \tau_{4q_n+2i-1} \cup \tau_{4q_n+2i}$
by deformations along the second and the third pairs 
of quadrilateral disk patches.
 More precisely, 
trigonal normal disks on the right hand sides of tetrahedra
are replaced by ones on the left hand sides,
and vice versa.
 See Figure \ref{fig:QCprDiskPatch}.

 The resulting surface is non-orientable 
and fundamental with respect to Haken's matching equations
by Lemmas \ref{lemma:orientability} and \ref{lemma:HakenFund}.
 A single compression increases the Euler characteristic of the surface by two.
 Hence we obtain a closed surface 
of Euler characteristic $2 - p_n /2 + q_n -1 + q_{n-1} -1$.

\begin{figure}[htbp]
\begin{center}
\includegraphics[width=12cm]{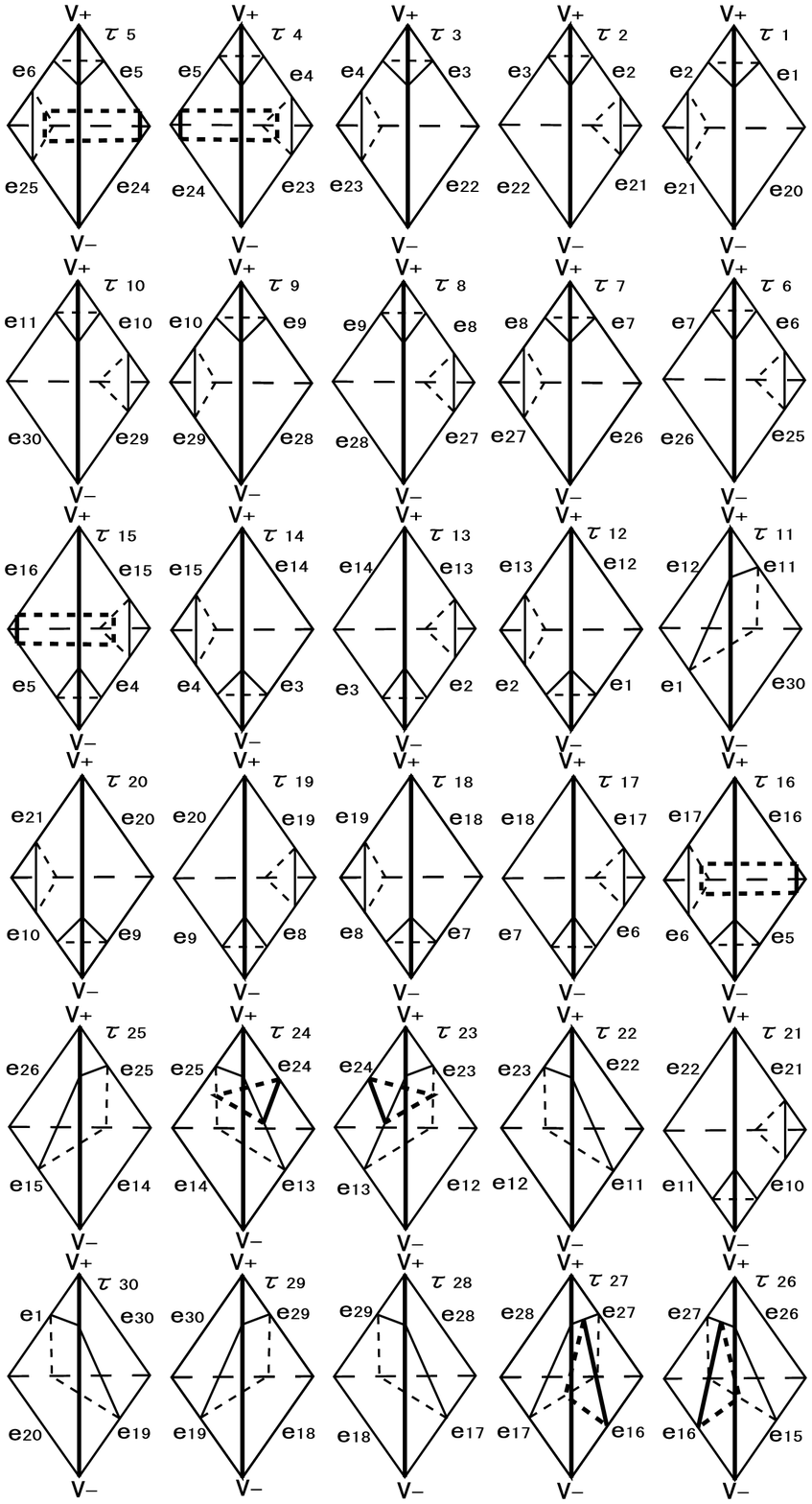}
\end{center}
\caption{}
\label{fig:CprDisk30-11}
\end{figure}



 Let $k$ be an integer equal to or larger than $3$.
 When $n \ge k+1$,
we perform the $k$-th step of compressions after the $(k-1)$-st step ones.
 This operation is performed 
along $(q_{n-(k-1)}-1)/2$ compressing disks.
 The $i$-th compressing disk is composed 
of $q_k + 1$ pairs of disk patches.
 The first pair is composed of two trigonal disk patches
which are in $\tau_{2q_n + 2q_{n-1} + \cdots + 2q_{n-(k-2)} + 2i - 1}$
and $\tau_{2q_n + 2q_{n-1} + \cdots + 2q_{n-(k-2)} + 2i}$
similar as those in Figure \ref{fig:TCprDiskPatch} (a).
 For $j \in \{ 2,3, \cdots, q_k \}$, 
the $j$-th pair consists of quadrilateral disk patches in 
$\tau_{(j+1) \cdot q_n + 2q_{n-1} + \cdots + 2q_{n-(k-2)} + 2i -1} 
\cup 
\tau_{(j+1) \cdot q_n + 2q_{n-1} + \cdots + 2q_{n-(k-2)} + 2i}$
as described in Figure \ref{fig:QCprDiskPatchGeneral}.
 The last pair is composed of trigonal disk patches in 
\newline
$\tau_{(q_k + 2) q_n + 2q_{n-1} + \cdots + 2q_{n-(k-2)} + 2i -1} 
\cup 
\tau_{(q_k + 2) q_n + 2q_{n-1} + \cdots + 2q_{n-(k-2)} + 2i}$
(Figure \ref{fig:TCprDiskPatch} (b)).

\begin{figure}[htbp]
\begin{center}
\includegraphics[width=14cm]{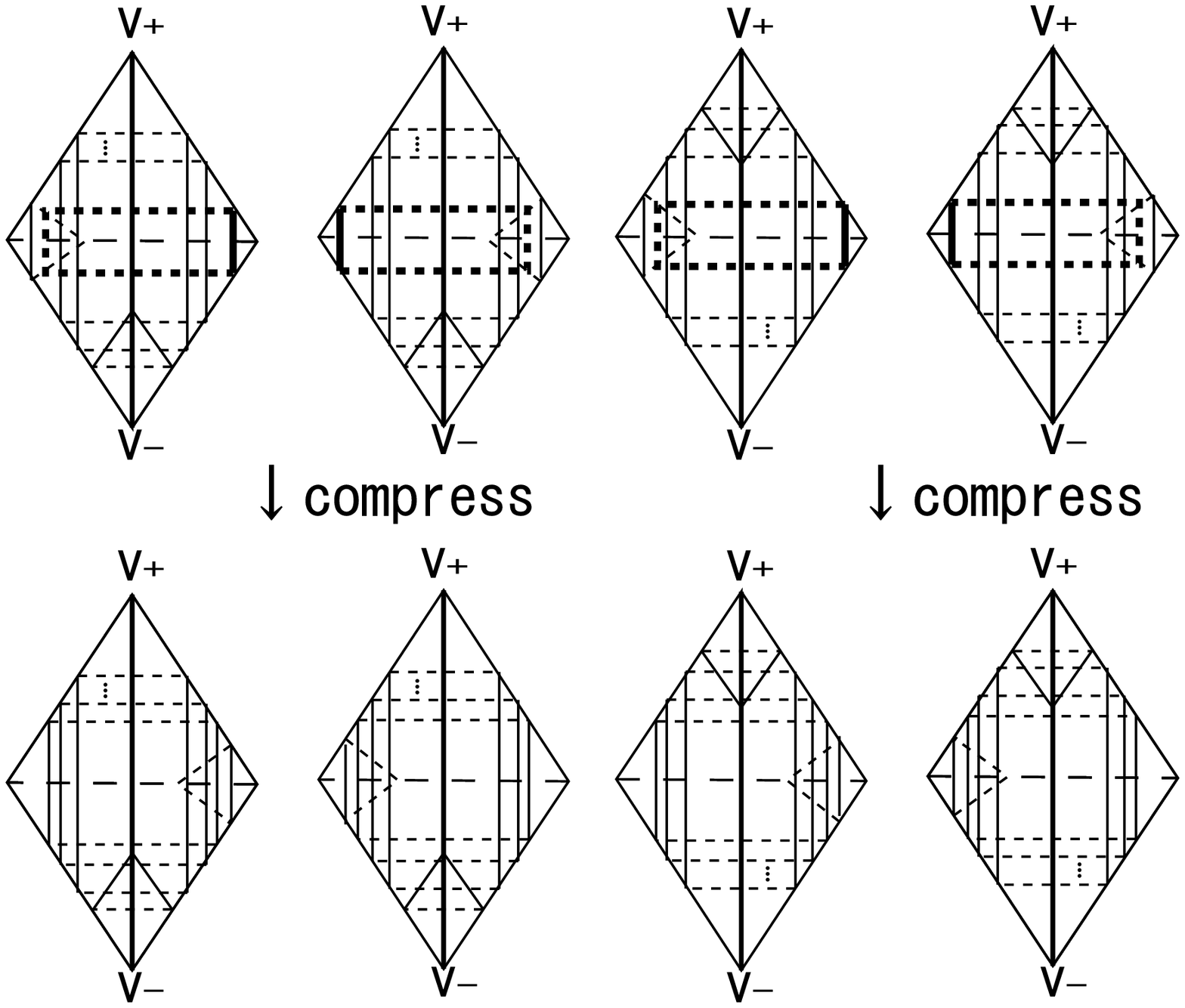}
\end{center}
\caption{}
\label{fig:QCprDiskPatchGeneral}
\end{figure}

 We will prove that we can place disk patches of compression disks as above
in section \ref{section:placement}.
 We should show that the first and the last pairs are in tetrahedra
where the compressing operations of the preceding steps have not occurred,
and the second through the $q_k$-th pairs are in tetrahedra
where such compressing operations have occurred.
 Moreover, for each integer $k$ with $1 \le k \le n-1$,
every tetrahedron must contain
at most one disk patch of compressing disks of the $k$-th step.

 After the $k$-th step, Q-coordinate will be
\newline
${\bf h}_k = 
{\bf h}_{k-1}
-\displaystyle\sum_{i=1}^{(q_{n-(k-1)} -1)/2} 
(\displaystyle\sum_{j=1}^{q_k} {\bf t}_{(j+1) \cdot q_n + 2q_{n-1} + \cdots + 2q_{n-(k-2)} + 2i-1}$
\newline
$\ \ \ \ \ \ \ \ \ \ \ \ \ \ \ \ \ \ \ \ \ \ \ \ \ \ \ \ \ \ \ \ 
- \displaystyle\sum_{j=2}^{q_k} ({\bf s}_{(j+1)q_n + 2q_{n-1} + \cdots + 2q_{n-(k-2)} + 2i -1} 
                  + {\bf s}_{(j+1)q_n + 2q_{n-1} + \cdots + 2q_{n-(k-2)} + 2i}))$
\newline
Subtracting 
$\sum_{j=1}^{q_k} {\bf t}_{(j+1)q_n + 2q_{n-1} + \cdots + 2q_{n-(k-2)} + 2i-1}
- \sum_{j=2}^{q_k} ({\bf s}_{(j+1)q_n + 2q_{n-1} + \cdots + 2q_{n-(k-2)} + 2i -1}$
\newline
$+ {\bf s}_{(j+1)q_n + 2q_{n-1} + \cdots + 2q_{n-(k-2)} + 2i})$
corresponds to the surgery along the $i$-th compressing disk.

 A single compression increases the Euler characteristic of the surface by two.
 Hence we obtain a closed surface ${\bf h}_{k}$
of Euler characteristic $2 - p_n /2 + \sum_{r=1}^{k} (q_{n-(r-1)} -1)$.
 The resulting surface is non-orientable 
and fundamental with respect to Haken's matching equations
by Lemmas \ref{lemma:orientability} and \ref{lemma:HakenFund}.


 After the compressing operations of the $(n-1)$-st step,
we obtain a fundamental closed non-orientable surface 

\noindent
${\bf h}_{n-1} = 
(\displaystyle\sum_{m=1}^{p/2} {\bf t}_{2m-1})/2$
\newline
$\ \ \ \ \ \ \ \ \ 
-\displaystyle\sum_{k=1}^{n-1}
\displaystyle\sum_{i=1}^{(q_{n-(k-1)} -1)/2} 
(\displaystyle\sum_{j=1}^{q_k} {\bf t}_{(j+1)q_n + 2q_{n-1} + \cdots + 2q_{n-(k-2)} + 2i-1}$
\newline
$\ \ \ \ \ \ \ \ \ \ \ \ \ \ \ \ \ \ \ \ \ \ \ \ \ \ \ \ \ \ \ \ \ \ \ \ \ \ \ 
-\displaystyle\sum_{j=2}^{q_k} ({\bf s}_{(j+1)q_n + 2q_{n-1} + \cdots + 2q_{n-(k-2)} + 2i -1} 
                              + {\bf s}_{(j+1)q_n + 2q_{n-1} + \cdots + 2q_{n-(k-2)} + 2i}))$

\noindent
where 
$\sum_{j=1}^{q_k} {\bf t}_{(j+1)q_n + 2q_{n-1} + \cdots + 2q_{n-(k-2)} + 2i-1}
- \sum_{j=2}^{q_k} ({\bf s}_{(j+1)q_n + 2q_{n-1} + \cdots + 2q_{n-(k-2)} + 2i -1}$
\newline
$+ {\bf s}_{(j+1)q_n + 2q_{n-1} + \cdots + 2q_{n-(k-2)} + 2i})$
is defined to be 
${\bf t}_{2i-1}$
when $k=1$,
and
$\sum_{j=1}^{q_2} {\bf t}_{(j+1)q_n + 2i-1}
-\sum_{j=2}^{q_2} ({\bf s}_{(j+1)q_n + 2i -1} 
                 + {\bf s}_{(j+1)q_n + 2i})$
when $k=2$.
 This surface ${\bf h}_{n-1}$ is of Euler characteristic
$2 - p_n /2 + \sum_{k=1}^{n-1} (q_{n-(k-1)} -1) = 2-n$
by the formula Lemma \ref{lemma:formulae} (2).
 This is the maximal Euler characteristic
among all closed non-orientable surfaces
in the $(p_n, q_n)$-lens space,
which we will show in section \ref{section:euler}.

 The surface ${\bf h}_{n-1}$ has 
$n-2$ sheets of normal disks of type $X_{m1}$ in $\tau_m$
for $m = \sum_{u=1}^{n-1} q_u$, $(\sum_{u=1}^{n-1} q_u) + 1$,
$\sum_{u=1}^n q_u$ and $(\sum_{u=1}^n q_u) + 1$.
 We will show this for $m=\sum_{u=1}^{n-1} q_u$
in section \ref{section:multiple}.


\section{Formulae for $p_n$ and $q_n$}\label{section:formulae}

 We give some formulae for $p_n$ and $q_n$ in this section.

\begin{lemma}\label{lemma:formulae}
 Let $\{ p_n \}$ and $\{ q_n \}$ be infinite sequences of integers
defined by 
$p_0=0$, $q_0=1$,
$p_n = (\kappa+1) p_{n-1} + \kappa q_{n-1}$
and $q_n = p_{n-1} + q_{n-1}$,
where $\kappa$ is a natural number.
 Then the formulae below hold.
\begin{enumerate}
\item[(1)] $p_n = \kappa q_n + p_{n-1}$
\item[(2)] $\kappa(q_1 + q_2 + \cdots + q_{\ell}) = p_{\ell}$ for $\ell \ge 1$.
\item[(3)] 
$-(q_1 + q_2 + \cdots + q_{\ell-1})p_n + q_{\ell} q_n
=-(q_1 + q_2 + \cdots + q_{\ell-2})p_{n-1} + q_{\ell-1} q_{n-1}$,
where $\ell \ge 2$ and $n \ge 1$.
When $\ell = 2$,
we define the sum $q_1 + q_2 + \cdots + q_{\ell-2}$ to be equal to $0$.
\item[(4)] 
$-(q_1 + q_2 + \cdots + q_{m-1})p_n + q_m q_n = q_{n-(m-1)}$,
and hence $q_m q_n \equiv q_{n-(m-1)}$ {\rm (mod. $p_n$)}
for $n \ge m-1$.
\item[(5)]
$(\kappa+1)q_n-p_n=q_{n-1}$ for $n \ge 1$.
\item[(6)]
$(2\kappa+1)q_n-2p_n =-p_{n-1}+q_{n-1}$ for $n \ge 1$.
\end{enumerate}
\end{lemma}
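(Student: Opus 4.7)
The plan is to prove the six identities in the natural order in which they feed into one another. Identity (1) is immediate: rewrite the defining recurrence as $p_n=p_{n-1}+\kappa(p_{n-1}+q_{n-1})=p_{n-1}+\kappa q_n$. Identity (2) then follows by induction on $\ell$, using (1) for the inductive step and the base case $\kappa q_1 = \kappa = p_1$ (since $q_1=p_0+q_0=1$ and $p_1=(\kappa+1)\cdot 0+\kappa\cdot 1=\kappa$). Identity (5) is one line from (1): $(\kappa+1)q_n-p_n=q_n-p_{n-1}=q_{n-1}$. Identity (6) is obtained by doubling (1) and adding $q_n$: $(2\kappa+1)q_n-2p_n=q_n-2p_{n-1}=q_{n-1}-p_{n-1}$.

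For (3), the plan is a direct substitution of the recurrences $p_n=(\kappa+1)p_{n-1}+\kappa q_{n-1}$ and $q_n=p_{n-1}+q_{n-1}$ into the left-hand side, then collect terms as $(\text{coeff}) p_{n-1}+(\text{coeff}') q_{n-1}$. The coefficient of $q_{n-1}$ becomes $-\kappa(q_1+\cdots+q_{\ell-1})+q_\ell$, which is $-p_{\ell-1}+q_\ell=q_{\ell-1}$ by (2) and (5). The coefficient of $p_{n-1}$ becomes $-(\kappa+1)(q_1+\cdots+q_{\ell-1})+q_\ell=-(q_1+\cdots+q_{\ell-1})-p_{\ell-1}+q_\ell=-(q_1+\cdots+q_{\ell-1})+q_{\ell-1}=-(q_1+\cdots+q_{\ell-2})$, again by (2) and (5). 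This matches the right-hand side exactly.

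Finally, (4) is proved by iterating (3): starting from $\ell=m$ and the given $n$, each application of (3) decreases both $\ell$ and $n$ by $1$, so after $m-1$ applications (which requires $n\ge m-1$ so that the index $n-(m-1)$ stays non-negative and (3) remains applicable at each step) we reach $-(q_1+\cdots+q_0)p_{n-(m-1)}+q_1 q_{n-(m-1)}$. With the convention that the empty sum is $0$ and with $q_1=1$, this equals $q_{n-(m-1)}$, proving the identity; the congruence $q_m q_n\equiv q_{n-(m-1)}\pmod{p_n}$ is then the obvious consequence.

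The only step that requires any genuine bookkeeping is (3), where one must be careful to invoke the already-established (2) and (5) with the correct indices. All the other identities are essentially one-line calculations or straightforward inductions, and no conceptual obstacle is expected.
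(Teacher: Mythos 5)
Your proof is correct and follows essentially the same route as the paper: (1) from regrouping the recurrence, (2) by induction using (1), (3) by direct substitution of the recurrences, and (4) by iterating (3), with (5) and (6) as one-line computations. The only cosmetic difference is in (3), where you substitute both recurrences at once and collect coefficients of $p_{n-1}$ and $q_{n-1}$, whereas the paper performs the substitutions and simplifications sequentially; both lead to the same algebra.
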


\begin{proof}
(1) 
$p_n = (\kappa+1)p_{n-1} + \kappa q_{n-1}
= \kappa (p_{n-1} + q_{n-1}) + p_{n-1}
= \kappa q_n + p_{n-1}$

(2) We prove this formula by induction on $\ell$.
\newline
$\kappa(q_1 + q_2 + \cdots + q_{\ell}) 
= \kappa (q_1 + q_2 + \cdots + q_{\ell-1}) + \kappa q_{\ell}
= p_{\ell-1} + \kappa q_{\ell}
= p_{\ell}$

The second equality holds by the assumption of induction.
The last equality is the formula (1).

(3)
$-(q_1 + q_2 + \cdots + q_{\ell-1}) p_n + q_{\ell} q_n
=-(q_1 + q_2 + \cdots + q_{\ell-1})(\kappa q_n + p_{n-1}) + q_{\ell} q_n$
\newline
$=-\kappa (q_1 + q_2 + \cdots + q_{\ell-1}) q_n 
 - (q_1 + q_2 + \cdots + q_{\ell-1}) p_{n-1} + q_{\ell} q_n$
\newline
$=-p_{\ell-1} q_n - (q_1 + q_2 + \cdots + q_{\ell-1}) p_{n-1} + q_{\ell} q_n$
\newline
$=-p_{\ell-1} q_n - (q_1 + q_2 + \cdots + q_{\ell-1}) p_{n-1} 
 + (p_{\ell-1} + q_{\ell-1}) q_n$
\newline
$=- (q_1 + q_2 + \cdots + q_{\ell-1}) p_{n-1} + q_{\ell-1}q_n$
\newline
$=- (q_1 + q_2 + \cdots + q_{\ell-2}) p_{n-1} 
 - q_{\ell-1} p_{n-1} + q_{\ell-1}(p_{n-1} + q_{n-1})$
\newline
$=- (q_1 + q_2 + \cdots + q_{\ell-2}) p_{n-1} + q_{\ell-1} q_{n-1}$

The first equality holds by (1).
The third equality follows from (2).

(4) Applying (3) $m-1$ times we have:
\newline
$-(q_1 + q_2 + \cdots + q_{m-1})p_n + q_m q_n 
=-(q_1 + q_2 + \cdots + q_{m-2})p_{n-1} + q_{m-1} q_{n-1}$ 
\newline
$=-(q_1 + q_2 + \cdots + q_{m-3})p_{n-2} + q_{m-2} q_{n-2} 
= \cdots $
\newline
$=-(q_1 + q_2 + \cdots + q_{m-m})p_{n-(m-1)} + q_{m-(m-1)} q_{n-(m-1)} 
= 0 + q_1 q_{n-(m-1)} = q_{n-(m-1)}$.
%

(5) 
$(\kappa+1)q_n-p_n
=(\kappa+1)(p_{n-1}+q_{n-1})-((\kappa+1)p_{n-1}+\kappa q_{n-1}) 
= q_{n-1}$.

(6) 
$(2\kappa+1)q_n-2p_n
=(2\kappa+1)(p_{n-1}+q_{n-1})-2((\kappa+1)p_{n-1}+\kappa q_{n-1})
=-p_{n-1}+q_{n-1}$.
\end{proof}


\section{placement of disk patches}\label{section:placement}

 In this section,
we show that we can place the disk patches of the compression disks
as in section \ref{section:construction}.


\begin{lemma}\label{lemma:LastRegion}
 Let $n$ , $k$ and $r$ be integers
with $n \ge 3$, $2 \le k \le n-1$ and $1 \le r \le p_{n-1}$.

\begin{enumerate}
\item[(1)] 
 The first pair of disk patches 
and the last pair of disk patches 
of the compression disks in section \ref{section:construction}
are contained in tetrahedra
in the last region.
\item[(2)]
 The tetrahedron $\tau_{2q_n+r}$ 
in the last region of the $(p_n, q_n)$-lens space 
contains the leading (resp. following)
disk patch of the $j$-th pair
of the $i$-th compressing disk of the $k$-th step
if and only if 
$\tau_r$ of the $(p_{n-1}, q_{n-1})$-lens space
contains the leading (resp. following) disk patch of the $j$-th pair
of the $i$-th compressing disk of the $(k-1)$-st step.
\end{enumerate}
\end{lemma}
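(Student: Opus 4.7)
The plan is to verify both statements by direct index computation, using the identities of Lemma \ref{lemma:formulae} with $\kappa = 2$.

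For part (1), I set $S := 2(q_{n-1} + q_{n-2} + \cdots + q_{n-(k-2)})$. The leading index of the first pair, $2q_n + S + 2i - 1$, is trivially $\ge 2q_n + 1$; for the upper bound $\le p_n$, I would note that $S + 2i \le 2(q_{n-1} + \cdots + q_{n-(k-1)}) \le 2(q_1 + \cdots + q_{n-1}) = p_{n-1}$ by Lemma \ref{lemma:formulae}(2), together with $p_n = 2q_n + p_{n-1}$. For the last pair, Lemma \ref{lemma:formulae}(4) gives $(q_k + 2)q_n \equiv 2q_n + q_{n-(k-1)} \pmod{p_n}$; the same type of bound places the reduced index in $[2q_n + 1, p_n]$. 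The following-patch indices differ only by $2i - 1 \mapsto 2i$, and the same bounds apply.

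For part (2), the key recursion is $S = 2q_{n-1} + S'$, where $S' := 2(q_{n-2} + \cdots + q_{n-(k-1)})$ is the offset appearing in the $(k-1)$-st step on the $(p_{n-1}, q_{n-1})$-lens space. Substituting $q_n = p_{n-1} + q_{n-1}$ produces
\[ (j+1)q_n + S + 2i - 1 = 2q_n + (j-1)p_{n-1} + (j+1)q_{n-1} + S' + 2i - 1. \]
For the first pair ($j = 1$) the right-hand side is $2q_n + [2q_{n-1} + S' + 2i - 1]$, and the bracketed quantity is precisely the first-pair index of the $(k-1)$-st step on the smaller lens space. For the last pair, applying Lemma \ref{lemma:formulae}(4) to both $q_k q_n$ and $q_{k-1} q_{n-1}$ yields the common value $r = q_{n-(k-1)} + 2q_{n-1} + S' + 2i - 1$ on both sides.

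For the intermediate pairs $j = 2, \ldots, q_k$, the right-hand side of the identity must be further reduced modulo $p_n$. Using the congruence $3q_n \equiv q_{n-1} \pmod{p_n}$ (Lemma \ref{lemma:formulae}(4) with $m = 2$), one checks that among $j \in \{2, \ldots, q_k\}$, exactly one value per intermediate pair of the $(k-1)$-st step on the smaller lens space produces a reduced index in the last region, and that the resulting $r$ matches.

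The main obstacle I expect is the case analysis required to track these mod-$p_n$ reductions for the intermediate pairs: one must verify both existence (every pair of step $k-1$ on smaller corresponds to some pair of step $k$ on larger in the last region) and uniqueness (no extra step-$k$ pairs end up in the last region). This is essentially a Diophantine accounting relying on $\gcd(p_{n-1}, q_{n-1}) = 1$ together with the relation $p_n = 3p_{n-1} + 2q_{n-1}$, and is cleanest to organize by induction on $k$ (equivalently, on the number of continued-fraction layers $n - k$), bootstrapping off the base case where the first and last pairs already match as computed above.
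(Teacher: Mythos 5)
Your treatment of part (1) and of the $j = 1$ (first pair) and last-pair cases of part (2) is correct and follows essentially the same index arithmetic as the paper. (Small typo: your $S'$ should run down to $q_{n-(k-2)}$, not $q_{n-(k-1)}$, for it to be the offset of the $(k-1)$-st step on the smaller lens space.) The gap is exactly where you anticipate it: the intermediate pairs $j \in \{2,\ldots,q_k\}$. You assert ``one checks that \ldots exactly one value per intermediate pair of the $(k-1)$-st step on the smaller lens space produces a reduced index in the last region, and that the resulting $r$ matches,'' but you never perform this check, and it is not a formality --- it is the content of the lemma.

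The paper does not organize this by reducing each $j$-th index modulo $p_n$ independently; instead it tracks \emph{consecutive} pairs. Starting from a pair whose leading patch sits in $\tau_{2q_n + r}$ in the last region, the $(j+\ell)$-th pair's leading patch sits at $\tau_{(\ell+2)q_n + r}$ reduced mod $p_n$, and the argument splits into two cases. If $1 \le r \le p_{n-1} - q_{n-1}$, pairs $j+1$ and $j+2$ fall in the first and second regions, and pair $j+3$ re-enters the last region at $r' = 3q_n + r - p_n = r + q_{n-1}$ (Lemma~\ref{lemma:formulae}(5)). If $p_{n-1} - q_{n-1} < r \le p_{n-1}$, pairs $j+1,\ldots,j+4$ cycle through the first and second regions twice before pair $j+5$ re-enters at $r' = 5q_n + r - 2p_n = r + q_{n-1} - p_{n-1}$ (Lemma~\ref{lemma:formulae}(6)). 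In both cases $r' \equiv r + q_{n-1} \pmod{p_{n-1}}$, which is precisely the step between consecutive pair indices on the $(p_{n-1},q_{n-1})$-lens space. Combined with coprimality of $p_{n-1}$ and $q_{n-1}$, the matching of the first and last pairs, and the bound (number of pairs) $= q_k + 1 < p_{n-1}$, this two-case walk is what establishes both existence and uniqueness. Your proposal names the congruence $3q_n \equiv q_{n-1} \pmod{p_n}$ but not the $5q_n - 2p_n$ case, and offers no mechanism for seeing why some last-region pair isn't skipped or duplicated; as written the proof is incomplete at exactly the step you flagged as the ``main obstacle.''
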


 The number of compressing disks of the $k$-th step
for $(p_n, q_n)$-lens space
is $(q_{n-(k-1)}-1)/2$,
and that of the $(k-1)$-st step
for $(p_{n-1}, q_{n-1})$-lens space
is $(q_{(n-1)-((k-1)-1)}-1)/2$.
 Note that they are equal.

\begin{proof}
 The leading disk patch 
of the $1$st pair of the $i$-th compressing disk of the $k$-th step 
for the $(p_n, q_n)$-lens space
is contained in 
$\tau_{2(q_n + q_{n-1} + \cdots + q_{n-(k-2)}) + 2i-1}$.
 This tetrahedron is contained in the last region,
which follows from Lemma \ref{lemma:formulae} (2)
and the fact that the number of compression disks of the $k$-th step 
is $(q_{n-(k-1)}-1)/2$.
 On the other hand,
$\tau_{2(q_{n-1} + q_{n-2} + \cdots + q_{(n-1)-((k-1)-2)}) + 2i-1}$
of the $(p_{n-1}, q_{n-1})$-lens space
contains the leading disk patch 
of the $1$st pair of the $i$-th compressing disk of the $(k-1)$-st step.
 The numbers assigned to these two tetrahedra differ by $2q_n$.

 The leading disk patch 
of the last pair of the $i$-th compressing disk of the $k$-th step 
for the $(p_n, q_n)$-lens space
is contained in the tetrahedron numbered
\newline
$2(q_n + q_{n-1} + \cdots + q_{n-(k-2)}) + 2i-1 + q_n q_k
\equiv 
2(q_n + q_{n-1} + \cdots + q_{n-(k-2)}) + 2i-1 + q_{n-(k-1)}$
\newline
(mod. $p_n$) since the last pair is the $(q_k + 1)$-st one.
 The congruence $\equiv$ follows by Lemma \ref{lemma:formulae} (4).
 This tetrahedron is in the last region
by a similar argument as above.
 On the other hand,
the leading disk patch 
of the last pair of the $i$-th compressing disk of the $(k-1)$-st step 
for the $(p_{n-1}, q_{n-1})$-lens space
is contained in the tetrahedron numbered
\newline
$2(q_{n-1} + q_{n-2} + \cdots + q_{(n-1)-((k-1)-2)}) + 2i-1 + q_{n-1} q_{k-1}$
\newline
$\equiv 
2(q_{n-1} + q_{n-2} + \cdots +q_{(n-1)-((k-1)-2)}) + 2i-1 +q_{(n-1)-((k-1)-1)}$.
(mod. $p_{n-1}$)
\newline
 The numbers assigned to these two tetrahedra differ by $2q_n$.

 Similar things hold for the following disk patches
of the first and the last pairs
since they are in tetrahedra
next to those contain the leading disk patches.

 Suppose that
$\tau_{2q_n + r}$ in the last region contains
the leading (resp. following) disk patch of the $j$-th pair 
of the $i$-th compressing disk of the $k$-th step
for the $(p_n, q_n)$-lens space.
 Let $j'$ be the number assigned to the next pair
whose leading (resp. following) disk patch is contained 
in a tetrahedron $\tau_{2q_n + r'}$ in the last region.
 Then we will show that 
either $r'-r = q_{n-1}$ (when $1 \le r \le p_{n-1} -q_{n-1}$)
or $r' -r = q_{n-1} - p_{n-1}$ (when $p_{n-1} -q_{n-1}< r \le p_{n-1}$)
holds
as if these two leading (resp. following) disk patches were of adjacent pairs
of a compressing disk for $(p_{n-1}, q_{n-1})$-lens space.

 If $1 \le r \le p_{n-1} - q_{n-1}$,
then the leading (resp. following) disk patches of the $(j+1)$-st pair,
the $(j+2)$-nd pair
and the $(j+3)$-rd pair
are contained 
in a tetrahedron in the first region,
the second region
and the last region
respectively.
 In fact,
the leading (resp. following) disk patch
of the $(j+\ell)$-th pair
is contained in 
$\tau_{(\ell+2)q_n + r}=\tau_{(\ell+2)q_n + r -p_n}$,
and 
$(1<)q_{n-1}+1 \le 3q_n + r - p_n \le p_{n-1}(<q_n)$,
$q_n+q_{n-1}+1 \le 4q_n + r - p_n \le q_n + p_{n-1}$
and
$2q_n+q_{n-1}+1 \le 5q_n + r - p_n \le 2q_n + p_{n-1}$
hold by Lemma \ref{lemma:formulae} (5)
and $q \le r \le p_{n-1}-q_{n-1}$.
 Hence $j'=j+3$, $r'=3q_n + r - p_n$,
and $r'-r= 3q_n -p_n = q_{n-1}$
as expected.

 If $p_{n-1} - q_{n-1} < r \le p_{n-1}$,
then the leading (resp. following) disk patch of the $(j+1)$-st pair,
the $(j+2)$-nd pair,
the $(j+3)$-rd pair
the $(j+4)$-th pair,
and the $(j+5)$-th pair
are contained 
in a tetrahedron in the first region,
the second region,
the first region,
the second region
and the last region
respectively.
 In fact,
the leading (resp. following) disk patch
of the $(j+\ell)$-th pair
is contained in 
$\tau_{(\ell+2)q_n + r}
=\tau_{(\ell+2)q_n + r -p_n}
=\tau_{(\ell+2)q_n + r -2p_n}$,
and 
$(1<)p_{n-1} < 3q_n+r-p_{n-1} \le q_n$,
$q_n+p_{n-1} < 4q_n + r - p_n \le 2q_n$,
$0 < 5q_n + r - 2p_n \le q_{n-1} (<q_n)$,
$q_n < 6q_n + r - 2p_n \le q_n + q_{n-1}$,
$2q_n < 7q_n + r - 2p_n \le 2q_n + q_{n-1}(<p_n)$
hold by Lemma \ref{lemma:formulae} (5), (6)
and $p_{n-1}-q_{n-1} < r \le p_{n-1}$.
 Hence $j'=j+5$, $r'=5q_n + r - 2p_n$,
and $r'-r= 5q_n - 2p_n = -p_{n-1} + q_{n-1}$
as expected.

 The above arguments show the lemma
since $p_{n-1}$ and $q_{n-1}$ are coprime,
since the number of pairs of a compressing disk of the $k$-th step 
is $q_k + 1$
and since this number is smaller than $p_{n-1}$ 
because of the conditions $k \le n-1$ and $n \ge 3$.
\end{proof}

\begin{lemma}\label{lemma:LastTetrahedra}
\begin{enumerate}
\item[(1)]
 No disk patch is placed 
in $\tau_{q_n}$, $\tau_{2q_n}$ and $\tau_{p_n}$,
the last tetrahedra of the first, the second and the last regions.
\item[(2)]
 For any integer $j$ with $1 \le j \le p_k + 1$
and any integer $k$ with $1 \le k \le n-1$,
the $j$-th pairs of disk patches of 
$(q_{n-(k-1)}-1)/2$ compressing disks of the $k$-th step
is in the same region.
\end{enumerate}
\end{lemma}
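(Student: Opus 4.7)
The plan is to prove both parts by induction on $n$, with Lemma~\ref{lemma:LastRegion} as the main reduction. For the base case $n \le 2$ only the first step of compressions is performed; the $i$-th compressing disk occupies $\tau_{2i-1}, \tau_{2i}, \tau_{q_n+2i-1}, \tau_{q_n+2i}$ with $1 \le i \le (q_n-1)/2$, so $2i \le q_n-1$, and therefore $\tau_{q_n}$, $\tau_{2q_n}$, and every tetrahedron of the last region (in particular $\tau_{p_n}$) are left untouched. Part~(2) in this case is immediate: the first pairs all lie in the first region and the second pairs all lie in the second region.

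For $n \ge 3$, step~1 is handled exactly as above. For step $k$ with $2 \le k \le n-1$, I would invoke Lemma~\ref{lemma:LastRegion}(2): the correspondence $\tau_{2q_n+r} \leftrightarrow \tau_r$ identifies last-region disk patches of step $k$ in the $(p_n,q_n)$-case with disk patches of step $k-1$ in the $(p_{n-1},q_{n-1})$-case. By the inductive hypothesis applied to the smaller lens space, $\tau_{p_{n-1}}$ is avoided, and hence $\tau_{p_n}=\tau_{2q_n+p_{n-1}}$ is avoided. The middle pairs of step~$k$, which sit in the first or second region, have positions of the form $3q_n+r-p_n$, $4q_n+r-p_n$, $5q_n+r-2p_n$, or $6q_n+r-2p_n$ computed in the proof of Lemma~\ref{lemma:LastRegion}; using Lemma~\ref{lemma:formulae}(5)--(6), each such value falls strictly inside $(0,q_n)$ or $(q_n,2q_n)$, and equality with $q_n$ or $2q_n$ would force $r$ to take one of the boundary values $q_{n-1}$, $p_{n-1}-q_{n-1}$, or $p_{n-1}$, each of which is ruled out by the inductive hypothesis.

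For part~(2), fix $k$ and a pair index $j$. The leading disk patch of the $j$-th pair of the $i$-th compressing disk of step~$k$ sits in $\tau_{(j+1)q_n + S + 2i-1}$ (reduced modulo $p_n$), where $S = 2(q_{n-1}+\cdots+q_{n-(k-2)})$ depends only on $k$ and $n$; the following patch sits in the next tetrahedron. As $i$ varies, these positions form an arithmetic progression of common difference $2$ and total span $q_{n-(k-1)}-3$, which for $k=1$ is less than the length $q_n$ of the first and second regions (the only regions then visited), and for $k \ge 2$ is at most $q_{n-1}-3 < p_{n-1}$, hence smaller than every region length. By part~(1), none of the boundary tetrahedra $\tau_{q_n}, \tau_{2q_n}, \tau_{p_n}$ lies in the progression, so the progression cannot cross a region boundary, and all its terms lie in a single region.

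The main obstacle will be the middle-pair case of part~(1): each of the position formulas must be checked against $q_n$ and $2q_n$, and each equality case must be traced back to a boundary tetrahedron in the smaller lens space that is forbidden by the induction hypothesis. Once this case analysis is done, part~(2) follows quickly from the arithmetic-progression argument.
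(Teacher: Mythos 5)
Your proposal follows essentially the same route as the paper: induction on $n$, with Lemma~\ref{lemma:LastRegion}(2) reducing $\tau_{p_n}$ to the $(p_{n-1},q_{n-1})$-case, Lemma~\ref{lemma:LastRegion}(1) together with the middle-pair position formulas handling $\tau_{q_n}$ and $\tau_{2q_n}$, and for part~(2) the observation that the $j$-th pairs occupy $q_{n-(k-1)}-1$ consecutive tetrahedra, a number smaller than every region length. You actually supply more explicit detail on $\tau_{q_n}$ and $\tau_{2q_n}$ than the paper, which dismisses them in a single sentence via Lemma~\ref{lemma:LastRegion}(1). One small inaccuracy worth flagging: equality with $q_n$ or $2q_n$ in the formulas $3q_n+r-p_n$ and $4q_n+r-p_n$ forces only $r=p_{n-1}$, while $5q_n+r-2p_n=q_n$ and $6q_n+r-2p_n=2q_n$ give $r=2p_{n-1}$, outside the admissible range $1\le r\le p_{n-1}$; so the forced boundary value is $p_{n-1}$ alone, not $q_{n-1}$ or $p_{n-1}-q_{n-1}$ as you list. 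Since $r=p_{n-1}$ means the preceding last-region patch would sit in $\tau_{2q_n+p_{n-1}}=\tau_{p_n}$, which you have already excluded, the argument closes correctly despite the slip.
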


\begin{proof}
 As shown in Figure \ref{fig:ComprDisk8-3},
the last tetrahedron $\tau_{p_n}$ does not contain a disk patch
for $n=2$.
 Then Lemma \ref{lemma:LastRegion} (2) and an inductive argument
gurantee that the last tetrahedron does not contain a disk patch.
 Since every compressing disk has the first pair of disk patches
and the last pair of disk patches in the last region
as shown in Lemma \ref{lemma:LastRegion} (1),
no disk patches are in $\tau_{q_n} \cup \tau_{2q_n}$.
 Thus (1) holds.

 Then (2) follows
because
the $j$-th pairs of disk patches of 
$(q_{n-(k-1)}-1)/2$ compressing disks of the $k$-th step
are contained in $q_{n-(k-1)}-1$ consecutive tetrahedra.
\end{proof}

\begin{lemma}
\begin{enumerate}
\item[(1)] 
The first and the last pairs of disk patches of a compressing disk 
in section \ref{section:construction}
are in tetrahedra
where the compressing operations of the preceding steps have not occurred.
 The second through the $q_k$-th pairs of disk patches of a compressing disk
in section \ref{section:construction}
are in tetrahedra
where such compressing operations have already occurred.
\item[(2)]
 It does not occur
that a disk patch of the $j_1$-th pair of the $i_1$-th compressing disk
of the $k$-th step
and that of the $j_2$-th pair of the $i_2$-th compressing disk
of the $k$-th step
with $j_1 \ne j_2$ or $i_1 \ne i_2$
are in the same tetrahedron.
\end{enumerate}
\end{lemma}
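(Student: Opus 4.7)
The plan is to prove both parts by induction on $n$. The base case $n = 2$ admits only step $k = 1$, which consists of a first pair and a last pair with no preceding step, so part (1) holds vacuously and part (2) is immediate from the explicit placement of step 1 in tetrahedra $\tau_1, \ldots, \tau_{2q_n}$ described in Section \ref{section:construction}. For the inductive step $n \ge 3$, I would classify each disk patch of the $k$-th step on $(p_n, q_n)$ into two classes: \emph{Type A}, the patches lying in the last region, and \emph{Type B}, those lying in the first or second region. By Lemma \ref{lemma:LastRegion} (1), the first and last pairs are always Type A, while the intermediate pairs may be of either type.

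To handle part (1), for a Type A patch I would invoke the bijection in Lemma \ref{lemma:LastRegion} (2) between step-$k$ patches in the last region of $(p_n, q_n)$ and step-$(k-1)$ patches of $(p_{n-1}, q_{n-1})$. This bijection preserves the pair index $j$ and sends the compressing operations of steps $2, \ldots, k-1$ on $(p_n, q_n)$ (restricted to the last region) to the operations of steps $1, \ldots, k-2$ on $(p_{n-1}, q_{n-1})$; moreover, step $1$ on $(p_n, q_n)$ does not enter the last region at all. Hence the inductive hypothesis applied to step $k-1$ on $(p_{n-1}, q_{n-1})$ transfers the first/last-versus-intermediate assertion for all Type A patches. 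For a Type B intermediate patch, Lemma \ref{lemma:LastTetrahedra} (1) rules out the ending tetrahedra $\tau_{q_n}$ and $\tau_{2q_n}$, so the patch sits in a non-ending tetrahedron of the first or second region; since step 1 operates on exactly these non-ending tetrahedra (as described in Section \ref{section:construction}), the patch lies in a tetrahedron already touched by step 1, completing part (1).

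For part (2), I would split on the regions of the two given step-$k$ patches. Different regions trivially give different tetrahedra. If both are Type A, the bijection with $(p_{n-1}, q_{n-1})$ combined with the inductive hypothesis rules out coincidence. For the same $j$ but different $i$, Lemma \ref{lemma:LastTetrahedra} (2) keeps all such patches in a single region, while the leading patch of the $j$-th pair of the $i$-th compressing disk sits at index $(j+1)q_n + S_k + 2i - 1$ modulo $p_n$ with $S_k = 2(q_{n-1} + \cdots + q_{n-(k-2)})$, so the positions differ by the nonzero even number $2(i_2-i_1)$ whose absolute value is at most $q_{n-(k-1)} - 1 < q_n$, giving distinctness. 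The remaining Type-B-versus-Type-B case with $j_1 \ne j_2$ I would attack by rewriting the residues $(j+1)q_n \pmod{p_n}$ as explicit $\mathbb{Z}$-combinations of the $q_r$'s using Lemma \ref{lemma:formulae} (in particular item (4), which gives $q_j q_n \equiv q_{n-(j-1)} \pmod{p_n}$), and then showing that within a single region the $j$-shifts and the $2i$-shifts cannot conspire to collide.

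The main obstacle is precisely this last subcase: verifying rigorously that as $j$ ranges over $\{2, \ldots, q_k\}$ and $i$ over $\{1, \ldots, (q_{n-(k-1)}-1)/2\}$, the residues $(j+1)q_n + S_k + 2i - 1 \pmod{p_n}$ that happen to land in the same first-or-second region are pairwise distinct. This requires careful bookkeeping with the recursion $p_n = 3p_{n-1} + 2q_{n-1}$, $q_n = p_{n-1} + q_{n-1}$ via the identities of Lemma \ref{lemma:formulae}, and exploits $\gcd(p_n, q_n) = 1$ to translate non-coincidence of the $j$-residues into non-coincidence of the resulting tetrahedron indices modulo $p_n$, once the bounded range of the $i$-perturbations has been accounted for.
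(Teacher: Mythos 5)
Your overall framework---induction on $n$ with base case $n=2$, splitting by region, invoking Lemma \ref{lemma:LastRegion} (2) to push the last region down to the $(p_{n-1},q_{n-1})$ case, and using Lemma \ref{lemma:LastTetrahedra} (1) for the first/second regions---is the same architecture the paper uses, and your treatment of part (1) is essentially correct. The Type-A / Type-B classification and the handling of fixed $j$ with varying $i$ via the bound $2|i_1-i_2| \le q_{n-(k-1)}-1$ are also sound.

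The genuine gap is exactly where you flag it: the Type-B subcase of part (2) with $j_1 \ne j_2$. You propose reducing residues $(j+1)q_n \bmod p_n$ via Lemma \ref{lemma:formulae} (4) and then checking by hand that the $j$-shifts and $2i$-shifts cannot collide inside one region, but you do not carry this out, and this is where all the content lies. The paper closes this case by a different, structural route rather than by residue arithmetic: Lemma \ref{lemma:LastTetrahedra} (2) asserts that for each fixed $j$ the entire ``column'' of $j$-th pairs (over all $i$) lies in a single region and occupies $q_{n-(k-1)}-1$ consecutive tetrahedra. Since each column is obtained from the previous one by the fixed shift $+q_n \bmod p_n$ (a bijection), and since part (2) is already established in the last region---which contains the $j=1$ and $j=q_k+1$ columns and, by Lemma \ref{lemma:LastRegion} (2) plus induction, every other column that returns there---collisions between columns in the first or second region would transport, under iterated shifting, to collisions in the last region, which have been excluded. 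In other words, the paper never expands $(j+1)q_n$ into $q_r$'s at all; it leverages the one-region-per-column confinement and propagates distinctness outward from the last region. Your proposed calculation might succeed with enough bookkeeping, but as written the proof is incomplete, and the step you defer is not a routine verification: the interaction between the range of $j$ (up to $q_k$) and the range of $2i$ (up to $q_{n-(k-1)}-1$) modulo $p_n$ is precisely what Lemma \ref{lemma:LastTetrahedra} (2) was set up to control, and without invoking it (or producing a substitute estimate) the argument does not close.
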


\begin{proof}
 We show this lemma by an inductive argument on $n$.
 For $n=2$, this lemma holds as shown in Figure \ref{fig:ComprDisk8-3}.
 We assume it holds for the $(p_{n-1}, q_{n-1})$-lens space.

 For $(p_n, q_n)$-lens space, this lemma holds in the last region
by Lemma \ref{lemma:LastRegion}
and an inductive argument.


 In the first and the second regions,
disk patches are placed in the tetrahedra
where compressing operations of the first step have already occured
by Lemma \ref{lemma:LastTetrahedra} (1).
 Moreover,
Lemma \ref{lemma:LastTetrahedra} (2) implies
that (2) of this lemma holds for disk patches in the first and the second regions
because
(2) of this lemma holds for disk patches in the last region
where the first and the last patches are.
\end{proof}


\section{maximality of Euler characteristic}\label{section:euler}

 In this section,
we show that the surface ${\bf h}_{n-1}$
constructed in section \ref{section:construction}
is of maximal Euler characteristic
among all closed non-orientable surfaces
in the $(p_n, q_n)$-lens space.

 A formula of the minimal crosscap number
among those of all such surfaces
is given by Bredon and Wood in \cite{BW}.
 We briefly recall it.
 We consider a $(p,q)$-lens space with $p$ even.
 $p/q$ can be presented as a continued fraction:
\newline
$p/q 
= 
[a_0, a_1, \cdots, a_m]
=
a_0 + 
\dfrac{1}{a_1 + 
\dfrac{1}{
\begin{array}{c}
a_2 + \\
 \\
 \\
\end{array}
\begin{array}{c}
 \\
\ddots \\
 \\
\end{array}
\begin{array}{c}
 \\
 \\
+ \dfrac{1}{a_m} \\
\end{array}
}}$
\newline
where $a_i$ are integers, 
$a_0 \ge 0,\ a_i >0$ for $1 \le i \le m$, and $a_m > 1$.
 We define $b_0, b_1, \cdots, b_m$, inductively,
\newline
$b_0 = a_0$,
\newline
$b_i = 
\left\{ \begin{array}{l}
a_i \ \ {\rm if}\ b_{i-1} \ne a_{i-1}\ {\rm or\ if}\ \displaystyle\sum_{j=0}^{i-1} b_j\ {\rm is\ odd} \\
0   \ \ {\rm if}\ b_{i-1} = a_{i-1}\ {\rm and}\ \displaystyle\sum_{j=0}^{i-1} b_j\ {\rm is\ even.} \\
\end{array} \right.$
\newline
Then, 
the minimal crosscap number
is $x_{\rm max}(p,q) = \dfrac{1}{2} \displaystyle\sum_{i=0}^m b_i$.

 For our case of $(p_n, q_n)$-lens space,
\newline
$\dfrac{p_1}{q_1} = 2$, 
$\ \ \dfrac{p_2}{q_2}=
2+\dfrac{1}{
1+\dfrac{1}{2}},\ \ \cdots, $
$\ \ \dfrac{p_n}{q_n}=
2+\dfrac{1}{
1+\dfrac{1}{
2+\dfrac{1}{
\begin{array}{c}
1+ \\
 \\
 \\
\end{array}
\begin{array}{c}
 \\
\ddots \\
 \\
\end{array}
\begin{array}{c}
 \\
 \\
+ \dfrac{1}{2} \\
\end{array}
}}}$
\newline
and hence, for $0 \le i \le 2(n-1)$,
$a_i = 
\left\{ \begin{array}{l}
2\ ({\rm when}\ i\ {\rm is\ even}) \\
1\ ({\rm when}\ i\ {\rm is\ odd}) \\
\end{array} \right.$
and 
$b_i = 
\left\{ \begin{array}{l}
2\ ({\rm when}\ i\ {\rm is\ even}) \\
0\ ({\rm when}\ i\ {\rm is\ odd}). \\
\end{array} \right.$
Thus $x_{\rm min}(p_n, q_n) = n$,
and the maximal Euler characteristic is $2-n$.


\section{$n-2$ parallel sheets of normal disks}\label{section:multiple}

 In this section, 
we will show the next lemma,
which implies that 
the compressing operations in section \ref{section:construction} occur 
$n-1$ times in $\tau_m$ with $m=\sum_{u=1}^{n-1} q_u$,
and hence $\tau_m$ contains $n-2$ sheets of normal disk of type $X_{m1}$.

\begin{lemma}
 For an odd integer $k$ with $1 \le k \le n-1$,
the leading disk patch
of the $(q_{n-k}-\sum_{j=1}^{n-(k+1)} q_j)$-th pair
of the $(((\sum_{r=1}^k q_r)+1)/2)$-th compression disk
of the $(n-k)$-th step
is in $\tau_m$ with $m=\sum_{u=1}^{n-1} q_u$.
 For an even integer $k$ with $1 \le k \le n-1$,
the following disk patch
of the $(q_{n-k}-\sum_{j=1}^{n-(k+1)} q_j)$-th pair
of the $((\sum_{r=1}^k q_r)/2)$-th compression disk
of the $(n-k)$-th step
is in $\tau_m$ with $m=\sum_{u=1}^{n-1} q_u$.
\end{lemma}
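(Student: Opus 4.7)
The plan is to track the tetrahedron index of the claimed disk patch using the explicit locations described in section~\ref{section:construction}, reduce modulo $p_n$, and verify it equals $m = q_1 + q_2 + \cdots + q_{n-1}$. Writing $S_\ell := q_1 + q_2 + \cdots + q_\ell$, the first observation is that both parities of $k$ lead to the same arithmetic: for odd $k$ the leading patch has index $\cdots + 2i - 1$ with $2i = S_k + 1$, whereas for even $k$ the following patch sits at $\cdots + 2i$ with $2i = S_k$, and in both cases the trailing contribution is exactly $S_k$.

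The main case is $1 \le k \le n-2$, for which the step number $k' = n-k$ satisfies $k' \ge 2$ and the formula of section~\ref{section:construction} applies uniformly. Since $j = q_{n-k} - S_{n-k-1}$ lies in the quadrilateral range $2 \le j \le q_{k'}$, the relevant index reduces to
\[
I \equiv (q_{n-k} - S_{n-k-1} + 1)\, q_n + 2\bigl(q_{n-1} + q_{n-2} + \cdots + q_{k+2}\bigr) + S_k \pmod{p_n}.
\]
The main arithmetic step is to expand $(q_{n-k} - S_{n-k-1} + 1) q_n$ and apply Lemma~\ref{lemma:formulae}(4) termwise: $q_{n-k} q_n \equiv q_{k+1}$, and $q_r q_n \equiv q_{n-r+1}$ for $1 \le r \le n-k-1$, whence $S_{n-k-1}\, q_n \equiv q_n + q_{n-1} + \cdots + q_{k+2}$ modulo $p_n$.

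After substitution, the $q_n$ contributions cancel and the remaining $-(q_{n-1} + \cdots + q_{k+2})$ combines with the $+2(q_{n-1} + \cdots + q_{k+2})$ to leave a single copy $+(q_{n-1} + \cdots + q_{k+2})$. Adding $q_{k+1}$ yields $q_{k+1} + q_{k+2} + \cdots + q_{n-1} = S_{n-1} - S_k$, so $I \equiv (S_{n-1} - S_k) + S_k = S_{n-1} = m \pmod{p_n}$; because $0 < S_{n-1} < p_n$, the index is exactly $m$.

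Finally I would dispatch the boundary case $k = n-1$ (so $k' = 1$), where the compressing disk has only two trigonal pairs and the first pair of the $i$-th disk is located at $\tau_{2i-1}, \tau_{2i}$ directly. With $j = 1$ and either $i = (S_{n-1}+1)/2$ (odd $k$, i.e.\ $n$ even) or $i = S_{n-1}/2$ (even $k$, i.e.\ $n$ odd), the required tetrahedron is $\tau_{S_{n-1}} = \tau_m$ in both parities. The only real obstacle is the bookkeeping in the main case: correctly pairing each numerical coefficient in the index with the right application of Lemma~\ref{lemma:formulae}(2) and (4), after which the telescoping down to $S_{n-1}$ is forced.
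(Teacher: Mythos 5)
Your proof is correct and follows essentially the same route as the paper's: write down the tetrahedron index of the designated disk patch from the explicit formula of section~\ref{section:construction}, reduce the $q_n$-multiples via Lemma~\ref{lemma:formulae}(4), and observe that everything telescopes to $S_{n-1}=m$. The one place you are actually more careful than the paper is the boundary case $k=n-1$ (first step), where the general index formula $(j+1)q_n+2(q_{n-1}+\cdots+q_{n-(k'-2)})+\cdots$ does not literally apply (it reads $(j-1)q_n+2i-1$ there); the paper treats this implicitly by allowing the sum $q_n+q_{n-1}+\cdots+q_{k+2}$ to be empty when $k+2>n$, whereas you isolate the case and verify it directly. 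You also make explicit the observation that the odd and even parities contribute the same trailing term $S_k$ (the paper simply states the even case is similar and omits it). These are presentational refinements of the same argument, not a different method.
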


 Disk patches as in the above lemma
actually appear in the construction in section \ref{section:construction}.
 In fact, using the formula of Lemma \ref{lemma:formulae} (2),
we have 
$q_1 + q_2 + \cdots + q_{\ell} +1
= \dfrac{p_{\ell}}{2} +1 
\le p_{\ell}
< p_{\ell}+q_{\ell}
= q_{\ell+1}$
for $\ell \ge 1$.
 This implies
$1 \le q_{n-k}-\sum_{j=1}^{n-(k+1)} q_j \le q_{n-k}+1$
and 
$1 \le ((\sum_{r=1}^k q_r)+1)/2 \le (q_{n-((n-k)-1)}-1)/2$.


\begin{proof}
 For an odd integer $k$,
the leading disk patch is contained in the tetrahedron numbered
\newline
$2(q_n + q_{n-1} + \cdots + q_{n-((n-k)-2)}) + 
2(((\sum_{r=1}^k q_r)+1)/2) -1) + 
q_n (q_{n-k}-(\sum_{j=1}^{n-(k+1)} q_j) -1) + 
1$
\newline
$=
2(q_n + q_{n-1} + \cdots + q_{k+2}) + 
(\sum_{r=1}^k q_r)+1-2 +
q_n q_{n-k} - (\sum_{j=1}^{n-(k+1)} q_n q_j) -q_n +
1$
\newline
$\equiv
2(q_n + q_{n-1} + \cdots + q_{k+2}) + 
(\sum_{r=1}^k q_r)+1-2 +
q_{k+1}-(\sum_{j=1}^{n-(k+1)} q_{n-j+1}) -q_n +
1$
\newline
$=
q_1 + q_2 + \cdots + q_{n-1}$.
\newline
where the symbol $\equiv$ denotes congruence modulo $p_n$,
and it holds by the formula of Lemma \ref{lemma:formulae} (4).

 The proof is similar for an even integer $k$, and we omit it.
\end{proof}


\bibliographystyle{amsplain}

\medskip

\noindent
Miwa Iwakura, Chuichiro Hayashi:
Department of Mathematical and Physical Sciences,
Faculty of Science, Japan Women's University,
2-8-1 Mejirodai, Bunkyo-ku, Tokyo, 112-8681, Japan.
hayashic@fc.jwu.ac.jp

\end{document}